\numberwithin{equation}{section}
\let\Re=\undefined\DeclareMathOperator*{\Re}{Re}
\let\Im=\undefined\DeclareMathOperator*{\Im}{Im}
\newcommand{\R}{\mathbb{R}}
\newcommand{\C}{\mathbb{C}}
\newcommand{\eps}{\varepsilon}
\newtheorem{theorem}{Theorem}[section]
\newtheorem{lemma}[theorem]{Lemma}
\newtheorem{proposition}[theorem]{Proposition}
\theoremstyle{definition}
\newtheorem{definition}[theorem]{Definition}
\newtheorem{remark}[theorem]{Remark}
\theoremstyle{remark}
\newcommand{\qtq}[1]{\quad\text{#1}\quad}
\begin{document}

\title[Inhomogeneous NLS]{Scattering for the non-radial\\ energy-critical inhomogeneous NLS}

\author[Guzm\'an]{Carlos M. Guzm\'an}
\address{Department of Mathematics, UFF, Brazil}
\email{carlos.guz.j@gmail.com}
\author[Murphy]{Jason Murphy} 
\address{Department of Mathematics \& Statistics, Missouri S\&T, USA}
\email{jason.murphy@mst.edu}

\begin{abstract} We prove scattering below the ground state threshold for an energy-critical inhomogeneous nonlinear Schr\"odinger equation in three space dimensions.  In particular, we extend results of Cho, Hong, and Lee \cite{ChoHongLee, ChoLee} from the radial to the non-radial setting.
\end{abstract}

\maketitle

\section{Introduction}

We study the following inhomogeneous nonlinear Schr\"odinger equation (NLS) in three space dimensions:
\begin{equation}\label{nls}
\begin{cases}
i\partial_t u + \Delta u + |x|^{-1}|u|^2 u = 0, \\
u|_{t=0}=u_0\in \dot H^1(\R^3).
\end{cases}
\end{equation}
This is a special case of a more general class of nonlinear Schr\"odinger equations of the form
\begin{equation}\label{inls}
i\partial_t u + \Delta u \pm |x|^{-b}|u|^p u = 0
\end{equation}
for some $b,p>0$.  This class of equations arises, for example, in nonlinear optical systems with spatially dependent interactions (see e.g. \cite{Belmonte}), and has been the subject of a great deal of recent mathematical investigation.  Of particular interest (from the mathematical perspective) have been the well-posedness theory, existence and stability of solitary waves, and the scattering theory \cite{Farah, GENSTU, Guzman, LeeSeo, CFG20, Campos, ChoHongLee, ChoLee, Dinh1, Dinh3, Dinh4, FG, FG2, MMZ, XZ19}. 

The choice of nonlinearity in \eqref{nls} makes the equation a \emph{focusing, energy-critical} model.  To make this precise, we first observe that \eqref{nls} is the Hamiltonian evolution corresponding to the conserved energy
\[
E(u) = \int_{\R^3} \tfrac12|\nabla u(t,x)|^2 - \tfrac14 |x|^{-1} |u(t,x)|^4\,dx.
\]
The term \emph{focusing} refers to the fact that the nonlinear part of the energy is negative.  In particular, the nonlinearity may balance the underlying linear dispersion (leading to solitary wave solutions) or even dominate (leading to wave collapse).  The term \emph{energy-critical} refers to the fact that the energy is invariant under the scaling symmetry associated to \eqref{nls}, namely,
\[
u(t,x)\mapsto \lambda^{\frac12} u(\lambda^2 t,\lambda x)\qtq{for}\lambda>0. 
\]
More generally, the equation with nonlinearity $|x|^{-b}|u|^p u$ in $d\geq3$ spatial dimensions is energy-critical whenever $p=\tfrac{4-2b}{d-2}$.  We focus here on the case of a cubic nonlinearity in three dimensions, although much of the analysis extends to the more general case.

Local well-posedness for data in $\dot H^1$ follows from the standard critical well-posedness theory, utilizing Strichartz estimates and the contraction mapping principle (see Proposition~\ref{P:LWP} below).  In this work, we consider the questions of global well-posedness and long-time behavior of solutions, particularly that of \emph{scattering}.  We say that a solution \emph{scatters} if 
\begin{equation}\label{scatter}
\exists u_\pm\in \dot{H}^1(\R^3)\qtq{such that}\lim_{t\to\pm\infty}\|u(t)-e^{it\Delta}u_\pm\|_{\dot H^1} = 0,
\end{equation}
where $e^{it\Delta}$ is the linear Schr\"odinger group.  While the local theory yields scattering for sufficiently small initial data, the existence of the \emph{ground state} solution
\[
u(t,x)=Q(x):= (1+\tfrac12|x|)^{-1}
\]
shows that we cannot expect scattering in general.  Instead, we will show that this special solution defines a sharp scattering threshold for \eqref{nls}.  The precise result we will prove is the following.

\begin{theorem}[Scattering below the ground state]\label{T} Suppose $u_0\in\dot H^1(\R^3)$ satisfies
\begin{equation}\label{threshold}
E(u_0)<E(Q) \qtq{and} \|u_0\|_{\dot H^1}<\|Q\|_{\dot H^1}.
\end{equation}
Then the corresponding solution $u$ to \eqref{nls} is global and obeys space-time bounds of the form
\[
\int_\R\int_{\R^3} |u(t,x)|^{10}\,dx\,dt \leq C(E(u_0))
\]
for some function $C:(0,E(Q))\to(0,\infty)$.  Consequently, $u$ scatters in the sense of \eqref{scatter}. 
\end{theorem}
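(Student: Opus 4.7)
The plan is to adapt the Kenig--Merle concentration-compactness/rigidity framework to the focusing energy-critical inhomogeneous NLS \eqref{nls}. The first block is variational: using an inhomogeneous Sobolev-type embedding $\||x|^{-1/4} u\|_{L^4(\R^3)} \lesssim \|u\|_{\dot H^1(\R^3)}$ together with a sharp constant characterization in which $Q$ is extremal, one converts \eqref{threshold} into a quantitative coercivity statement of the form $\|u(t)\|_{\dot H^1} \le (1-\delta)\|Q\|_{\dot H^1}$ on the maximal interval of existence. Combined with energy conservation and the $\dot H^1$ local well-posedness theory of Proposition~\ref{P:LWP}, this yields global existence together with a uniform bound on $\|u(t)\|_{\dot H^1}$; the scattering claim is then equivalent to establishing the global space-time bound stated in the theorem.

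The main work is a contradiction argument. Supposing the set of $\dot H^1$ initial data satisfying \eqref{threshold} for which the space-time bound fails is non-empty, define $E_c$ to be the infimum of its energies. Through a linear profile decomposition for $\dot H^1$-bounded sequences under $e^{it\Delta}$ — tracking only time translations and scaling parameters, since the weight $|x|^{-1}$ breaks the spatial translation invariance — and combined with a nonlinear perturbation/stability theory for \eqref{nls}, I extract an almost-periodic minimal non-scattering solution $u_c$ at energy $E_c$: there is a continuous scale function $N:\R\to(0,\infty)$ so that $\{N(t)^{-1/2}\, u_c(t,\cdot/N(t))\}_{t\in\R}$ is precompact in $\dot H^1(\R^3)$.

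Rigidity is then established by a truncated Morawetz/virial identity centered at the origin, which is a preferred point due to the weight $|x|^{-1}$. Differentiating the virial quantity produces a Pohozaev-type bulk term that, under \eqref{threshold} and the sharp inhomogeneous Sobolev inequality, has a uniform positive lower bound in $t$, modulo error terms that are negligible as the cutoff radius $R\to\infty$ by precompactness. Integrating on a long time interval while keeping the Morawetz quantity itself bounded by $O(R)$ yields a contradiction, provided the scale $N(t)$ is known not to escape to zero or infinity. This last point is where the inhomogeneity $|x|^{-1}$ works in our favor: a profile with $N(t)\to\infty$ is effectively decoupled from the nonlinearity, and the preferred origin rules out runaway spatial drift of the concentration core.

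The chief new difficulty compared with the radial analysis of Cho--Hong--Lee \cite{ChoHongLee,ChoLee} is the loss of radial Sobolev-type decay, which was used there to preclude spatial escape scenarios and to localize profiles near the origin essentially for free. In the non-radial setting a linear profile could a priori concentrate at a point $x_n$ drifting to infinity, where the weight satisfies $|x_n|^{-1}\to 0$; the technical heart of the argument will be to show that such a profile decouples from the nonlinear term in the limit and is therefore driven by the pure linear flow, hence scatters. This effectively reduces the profile decomposition analysis to profiles concentrating near the origin, after which the virial/rigidity step proceeds as outlined.
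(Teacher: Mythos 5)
Your overall skeleton is the paper's: Kenig--Merle induction on energy, variational coercivity from the sharp inhomogeneous Sobolev inequality, a minimal compact counterexample, the key new ingredient being that a profile whose spatial center drifts to infinity (relative to its length scale) decouples from the weight $|x|^{-1}$, is driven by the free flow, and hence scatters (this is precisely Proposition~\ref{P:embed}), and finally a virial/rigidity argument centered at the origin. However, there are two genuine gaps. First, your opening claim that coercivity plus energy conservation plus Proposition~\ref{P:LWP} already yields global existence is false in the energy-critical setting: the local existence time depends on the profile of the data, not merely on its $\dot H^1$ norm, so a uniform kinetic-energy bound does not rule out $T_{\max}<\infty$. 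Consequently your minimal solution cannot be assumed global on $\R$, and your rigidity step is incomplete: you only treat the soliton-like (global) scenario by the truncated virial, and never address the finite-time blowup scenario for the compact minimal solution. The paper handles this case separately (Section~\ref{S:FTBU}) via the reduced Duhamel formula of Proposition~\ref{P:RD} together with Bernstein/Hardy and conservation of mass, showing the mass vanishes; some such argument is indispensable and is missing from your proposal.

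Second, your mechanism for controlling the scale is wrong. You claim that ``a profile with $N(t)\to\infty$ is effectively decoupled from the nonlinearity,'' but the equation is exactly scale-invariant: the weight $|x|^{-1}$ is critical, so $u\mapsto\lambda^{1/2}u(\lambda^2 t,\lambda x)$ preserves \eqref{nls} and high-frequency concentration at the origin does \emph{not} weaken the nonlinear term. Only the spatial-drift scenario $|x_n|/\lambda_n\to\infty$ decouples. The scale is instead handled by normalizing (as in Theorem~\ref{T:reduction}, one arranges $\inf_t N(t)\geq 1$ by the rescaling arguments of Killip--Visan), after which the virial only needs tightness at unit scale, and the remaining possibility $N(t)\to\infty$ in finite time is exactly the finite-time blowup case addressed above. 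A related misstatement: the linear profile decomposition must still carry spatial translation parameters $x_n^j$, since the free propagator $e^{it\Delta}$ is translation invariant; dropping them (as your second paragraph suggests) would destroy the vanishing of the remainder in $L_{t,x}^{10}$. What the broken translation symmetry actually forces is the new construction of \emph{nonlinear} profiles for drifting cores, which your final paragraph correctly identifies as the technical heart; with that, plus the two repairs above, your outline aligns with the paper's proof.
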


Theorem~\ref{T} represents an extension of the results of \cite{ChoLee, ChoHongLee} from the radial to the non-radial setting.  As a matter of fact, the scenario described in Theorem~\ref{T}, in which the nonlinear ground state defines the sharp scattering threshold, is ubiquitous in the setting of focusing nonlinear dispersive equations.  Restricting just to nonlinear Schr\"odinger equations on $\R^d$, we refer the reader to \cite{AN, DHR, HR, FXC, Guevara, KV, DodsonE, DodsonM, KM, DM, DM2, ADM} for the standard power-type NLS (i.e. \eqref{inls} with $b=0$); to \cite{KMVZZ, KMVZ, Hong, LMM, DinhNRNLS, II} for NLS with external potentials; and to \cite{MMZ, FG, FG2, ChoHongLee, ChoLee, CFGM, XZ19} for the inhomogeneous NLS.  The techniques underlying the most of these works (including \cite{ChoLee, ChoHongLee}) stem from the work of Kenig and Merle on the energy-critical NLS \cite{KM}, in which the authors pioneered a strategy now known as the `Kenig--Merle roadmap'.  The proof of Theorem~\ref{T} will be based on this approach, as well.   In this approach, one reduces the problem of scattering to the problem of precluding the possibility of non-scattering solutions below the ground state threshold that possess certain compactness properties.  This latter step is typically done by utilizing conservation laws and localized virial/Morawetz estimates.  The coercivity in the virial identity follows from the variational characterization of the ground state and the sub-threshold hypothesis, while the compactness is vital for localizing the identity in space.   

Compared to the standard power-type NLS, the analysis of the inhomogeneous NLS presents some new challenges arising from the broken translation symmetry.  In previous works, some of these challenges have been avoided by restricting the analysis to radial (i.e. spherically symmetric) solutions (see e.g. \cite{ChoHongLee, ChoLee, FG, FG2, XZ19}).     This includes the papers \cite{ChoHongLee, ChoLee}, which previously considered the energy-critical problem. On the other hand, the works \cite{MMZ, CFGM} have successfully addressed the non-radial case for a range of \emph{energy-subcritical} problems (i.e. with $p<\tfrac{4-2b}{d-2}$ in \eqref{inls}).  In such problems, one works with data in the inhomogeneous space $H^1$, and the threshold is described in terms of both the mass and the energy of the ground state.  In these works (and in other works in the energy-subcritical setting), the $H^1$ assumption provides some \emph{a priori} control over the very low and very high frequencies of solutions.  Practically speaking, this means that the scaling symmetry plays no significant role in the analysis.  This is no longer the case in the energy-critical setting, and hence (as in \cite{ChoHongLee, ChoLee}) we must incorporate the scaling symmetry into the analysis.  The new challenge of the present work is therefore to address the non-radial problem in the scale-invariant setting. 

The analogue of Theorem~\ref{T} for the standard power-type NLS, namely, scattering below the ground state for the quintic NLS in three dimensions, has been established only in the radial setting (see \cite{KM}).  One way to view the difference between the radial and non-radial cases for that problem is as follows.  The minimal non-scattering solutions mentioned above are parametrized in part by a spatial center $x(t)$.  The usual virial arguments may be used effectively to rule out soliton-type solutions as long as one has some control over the size of $x(t)$.  The case of radial solutions leads to the best-case scenario $x(t)\equiv 0$.  In the general case, one may obtain $|x(t)|=o(t)$ as $t\to\infty$ using an argument based on the conservation of momentum, provided one can show that the mass/momentum of minimal blowup solutions are finite.  In particular, this argument seems only to work in dimensions $d\geq 5$ (see \cite{KV}), which corresponds to the range of dimensions for which the ground state
\[
W(x) = (1+\tfrac{1}{d(d-2)}|x|^2)^{-\frac{d}{2}+1}
\] 
belongs to $L^2$.  While the $d=4$ case (for which $W$ belongs to weak $L^2$) has been treated recently \cite{DodsonE}, the $d=3$ case has remained out of reach so far.  

On the other hand, in the setting of \eqref{nls} one may observe that due to the decaying factor in the nonlinearity, a solution with $|x(t)|\to \infty$ should behave like an approximate solution to the \emph{linear} Schr\"odinger equation and hence decay and scatter.  By making this idea precise (see Proposition~\ref{P:embed} below), we ultimately show that minimal non-scattering solutions must obey $x(t)\equiv 0$.  This effectively puts us in the same situation arising in the \emph{radial} problem (but without the need for a radial assumption).  A somewhat similar situation arises when studying NLS with decaying potentials, in which case the scenario $|x(t)|\to\infty$ effectively reduces the problem to that of the standard NLS, for which the sharp scattering threshold is known (see e.g. \cite{LMM, KMVZZ, KMVZ}).  Indeed, the approach we take here is inspired by these works; the key difference is that in our case, the correct `limiting model' is simply the underlying linear equation.  Not only does this provide some degree of simplification in the analysis, but, importantly, it means that our main result is \emph{not} conditional on the successful resolution of the non-radial problem for the $3d$ quintic NLS. 

As we have indicated above, the proof of Theorem~\ref{T} mostly follows the well-trodden path known as the `Kenig--Merle roadmap'.  In fact, some parts of the argument are essentially the same as what appears in \cite{ChoHongLee} (who studied the same problem in the radial setting).  Thus, while our presentation will be mostly self-contained, we will focus more on the novelties in the present analysis and at times omit or merely sketch certain standard arguments, providing appropriate citations along the way.

The rest of the paper will be organized as follows:
\begin{itemize}
\item In the remainder of the introduction, we will sketch the main ideas of the proof of Theorem~\ref{T} with an emphasis on what is new in this work.  We will then discuss some extensions of Theorem~\ref{T} to some related problems.
\item In Section~\ref{S:preliminaries}, we will collect results related to the local theory and stability theory for \eqref{nls}.  We will also collect some results related to the variational characterization of the ground state. 
\item In Section~\ref{S:CC}, we will discuss some results related to Theorem~\ref{T:reduction} below, in which Theorem~\ref{T} is reduced to the problem of precluding `compact' solutions below the ground state threshold.  This section contains the main new ingredient of the present work, namely, Proposition~\ref{P:embed}. 
\item In Section~\ref{S:virial}, we preclude the possibility of compact sub-threshold solutions.  It suffices to preclude two scenarios, namely, the finite-time blowup scenario (for which we use the conservation of mass) and the `soliton-like' scenario (for which we use the localized virial argument).  
\end{itemize}

\subsection{Sketch of the proof of Theorem~\ref{T}}  

We proceed by contradiction.  The first main step is to show that if Theorem~\ref{T} fails, then one may find a minimal non-scattering solution below the ground state threshold.  As a consequence of minimality, this solution can be shown to possess certain compactness properties.  The precise result we need is the following.

\begin{theorem}[Reduction to compact sub-threshold solutions]\label{T:reduction} Suppose that Theorem~\ref{T} fails.  Then there exists a maximal-lifespan solution $u:[0,T_{\max})\times\R^3\to\C$ such that the following hold:
\begin{itemize}
\item $u$ lies below the ground state threshold, i.e.
\[
E(u)<E(Q)\qtq{and} \sup_{t\in[0,T_{\max})}\|u(t)\|_{\dot H_x^1} < \|Q\|_{\dot H_x^1}.
\]
\item $u$ blows up its scattering norm forward in time, i.e.
\[
\|u\|_{L_{t,x}^{10}([0,T_{\max})\times\R^3)}=\infty.
\]
\item there exists a frequency scale function $N:[0,T_{\max})\to(0,\infty)$ such that
\[
\{N(t)^{-\frac12}u(t,N(t)^{-1}x):t\in[0,T_{\max})\}
\]
is precompact in $\dot H^1$.
\end{itemize}
In addition, we may assume $\inf_{t\in[0,T_{\max})}N(t)\geq 1$. 
\end{theorem}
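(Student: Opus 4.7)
The plan is to follow the now-standard Kenig--Merle concentration-compactness roadmap, with the key novelty being that Proposition~\ref{P:embed} eliminates profiles whose spatial center escapes to infinity. First, I introduce the critical energy $E_c := \sup\{E_0 \leq E(Q) : \text{every sub-threshold datum with } E(u_0) < E_0 \text{ yields a global scattering solution}\}$. If Theorem~\ref{T} fails, then $E_c < E(Q)$, and there is a sequence of sub-threshold solutions $u_n$ with $E(u_n(0)) \downarrow E_c$ failing to scatter forward in time.

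Next, I would apply a linear profile decomposition in $\dot H^1$ to $\{u_n(0)\}$, extracting profiles $\phi^j$ with parameters $(\lambda_n^j, x_n^j, t_n^j)$ and Strichartz-small remainders $w_n^J$, and replace each $\phi^j$ by a nonlinear profile $v^j$ obtained by matching asymptotic data. The Pythagorean decompositions of the energy and of the kinetic energy, combined with the sub-threshold assumption, force each $\phi^j$ to have energy $\leq E_c$ and kinetic energy below $\|Q\|_{\dot H^1}^2$. If two or more profiles are non-trivial, each has energy strictly below $E_c$, so the corresponding nonlinear profiles scatter with controlled space-time bounds; the stability theory then yields scattering of $u_n$ with a uniform $L_{t,x}^{10}$ bound, contradicting our choice. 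Hence a single profile $\phi^1$ survives and $w_n^1 \to 0$ in Strichartz.

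The key new step is to arrange $x_n^1 \equiv 0$. After passing to a further subsequence, either $x_n^1$ remains bounded relative to $\lambda_n^1$, in which case it is absorbed into the profile by translation, or $|x_n^1/\lambda_n^1| \to \infty$. The latter case is precisely where Proposition~\ref{P:embed} applies: the profile is situated in a region where the inhomogeneous factor $|x|^{-1}$ is small, so its nonlinear evolution is well approximated by the linear flow and hence has bounded $L_{t,x}^{10}$ norm; stability once more produces scattering of $u_n$, a contradiction. Embedding the surviving origin-centered profile into \eqref{nls} yields the critical element $u_c$, and running the same extraction argument on an arbitrary sequence $\{u_c(\tau_n)\}$, together with minimality, produces scales $N(t)$ for which $\{N(t)^{-1/2} u_c(t, N(t)^{-1} x) : t \in [0,T_{\max})\}$ is precompact in $\dot H^1$.

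Finally, the normalization $\inf_t N(t) \geq 1$ is arranged using the scaling symmetry $u \mapsto \mu^{1/2} u(\mu^2 t, \mu x)$, under which $N(t) \mapsto \mu^{-1} N(\mu^{-2} t)$: given a positive infimum one simply rescales. The scenario $\inf_t N(t) = 0$ is excluded by reasoning parallel to the translation step: along a subsequence with $N(t_n) \to 0$ the solution spreads to spatial scales much larger than $1$, where the inhomogeneous factor is small, and Proposition~\ref{P:embed} forces scattering. The main obstacle throughout is the translation-elimination step, which was avoided in the radial treatments \cite{ChoHongLee, ChoLee}; it is exactly here that Proposition~\ref{P:embed} and the decay of $|x|^{-1}$ do the essential non-radial work.
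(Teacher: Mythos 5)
Your outline of the main body matches the paper's argument: critical energy, linear profile decomposition, nonlinear profiles, stability, reduction to a single profile with $(t_n,x_n)\equiv(0,0)$, and the use of Proposition~\ref{P:embed} to dispose of profiles with $|x_n|/\lambda_n\to\infty$, which is indeed the new non-radial ingredient. Two caveats, one minor and one serious. Minor: Proposition~\ref{P:embed} is already needed at the \emph{multi-profile} stage, not only when a single profile survives. For a profile with $|x_n^j|/\lambda_n^j\to\infty$ you cannot build the nonlinear profile by ``matching asymptotic data'' under the flow of \eqref{nls} (translation is not a symmetry), nor can you invoke the induction hypothesis for it; the scattering nonlinear profile must come from Proposition~\ref{P:embed} itself, exactly as in the paper's construction of \eqref{NPD}.

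The serious gap is your treatment of the normalization $\inf_t N(t)\geq 1$. You claim the scenario $\inf_t N(t)=0$ is ``excluded'' because when $N(t_n)\to 0$ the solution spreads to large spatial scales where $|x|^{-1}$ is small, so Proposition~\ref{P:embed} forces scattering. This is wrong on both counts. First, dilation about the origin is an \emph{exact symmetry} of \eqref{nls}: the factor $|x|^{-1}$ has precisely the critical homogeneity, so a solution living at spatial scale $N(t)^{-1}\gg 1$ but centered at the origin is just a rescaled solution of the same equation; no smallness of the nonlinearity is gained (this is the whole point of energy-criticality, and it is why the scaling parameter cannot be eliminated the way the translation parameter is). Second, Proposition~\ref{P:embed} requires $|x_n|/\lambda_n\to\infty$, i.e.\ the \emph{center} displaced far relative to the profile's own length scale; for the compact solution the translation parameter has already been fixed at the origin, so $|x_n|/\lambda_n\equiv 0$ and the proposition says nothing. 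Moreover, the conclusion you aim for is not that $\inf N=0$ is impossible; rather, as in the paper (which cites \cite[Section~4]{KV}), one performs a further extraction: choose times $t_n$ at which $N$ is nearly infimal on $[0,t_n]$, rescale by $N(t_n)$, use almost periodicity to pass to a strong $\dot H^1$ limit, and thereby replace the solution by another minimal counterexample whose frequency scale is bounded below (on at least half of its maximal lifespan), after which a final rescaling gives $\inf N\geq 1$. Without an argument of this type, the last assertion of Theorem~\ref{T:reduction} — which is what makes the tightness Lemma~\ref{L:tight} and the localized virial step work — is unproved in your proposal.
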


The detailed construction of such solutions in the setting of the standard energy-critical NLS is presented clearly in the works \cite{KV, Visan}. For the energy-critical inhomogeneous NLS in the radial case, the proof of Theorem~\ref{T:reduction} was given in \cite{ChoHongLee}. As we will explain, addressing the non-radial case requires one main new ingredient, which appears as Proposition~\ref{P:embed} below.  Thus, in what follows, we will only sketch the proof of Theorem~\ref{T:reduction}, emphasizing what is new in the non-radial case.

\begin{proof}[Sketch of the proof of Theorem~\ref{T:reduction}]  In light of the small-data scattering theory, the failure of Theorem~\ref{T} implies the existence of a critical energy $E_c\in(0,E(Q))$, defined as the supremum over energy levels for which the kinetic energy constraint guarantees scattering.  In particular, we can find a sequence of initial data $u_{0,n}$ with corresponding global solutions $u_n$ to \eqref{nls} satisfying the following:
\begin{align}
&E(u_{0,n})\nearrow E_c\qtq{and}\| u_{0,n}\|_{\dot H^1} < \|Q\|_{\dot H^1},\label{un-condition1}\\
&\|u_n\|_{L_{t,x}^{10}(\R_\pm\times\R^3)}\to\infty.\label{un-condition2} 
\end{align}

The key is then to prove that there exist a subsequence in $n$ and scales $\lambda_n\in(0,\infty)$ such that $\lambda_n^{\frac12}u_{0,n}(\lambda_n x)$ converges strongly in $\dot H^1$ to some limit $\phi$.  The solution to \eqref{nls} with initial data $\phi$ is then essentially the solution described in Theorem~\ref{T:reduction} (we return to this point below). 

To demonstrate convergence, we utilize the linear profile decomposition adapted to the $\dot H^1\to L_{t,x}^{10}$ Strichartz estimate (see Proposition~\ref{P:LPD} below).  This allows us to write (up to a subsequence)
\[
u_{0,n}(x) = \sum_{j=1}^J g_n^j[e^{it_n^j\Delta}\phi^j] + w_n^J
\]
with all of the properties stated in Proposition~\ref{P:LPD}.  Here we are using the notation
\[
g_n^jf(x) = (\lambda_n^j)^{-\frac12} f(\tfrac{x-x_n^j}{\lambda_n^j})
\]
for suitable $\lambda_n^j$, $x_n^j$, and we may assume that either $t_n^j\equiv 0$ or $t_n^j\to\pm\infty$, and that either $x_n^j\equiv 0$ or $|x_n^j|\to\infty$.

We therefore need to prove the following: 
\begin{itemize}
\item[(i)] there exists a single profile $\phi$ in the decomposition above; 
\item[(ii)] the space-time translation parameters obey $(t_n,x_n)\equiv (0,0)$; 
\item[(iii)] the remainder obeys $w_n\to 0$ strongly in $\dot H^1$.
\end{itemize}

\emph{Proof of (i).} We prove (i) by contradiction.  Assuming the existence of multiple profiles, we can deduce (from \eqref{energy-decoupling}, \eqref{un-condition1}, and Lemma~\ref{energy-trapping}) that each profile obeys $E(\phi^j)<E_c$ as well as the second condition in \eqref{threshold}.  We then aim to construct a corresponding \emph{nonlinear} profile decomposition of the form
\begin{equation}\label{NPD}
u_n^J = \sum_{j=1}^J v_n^j + e^{it\Delta} w_n^J,
\end{equation}
where each $v_n^j$ is a scattering solution to \eqref{nls} associated to the profile $\phi^j$.  Exploiting the orthogonality of profiles (see \eqref{orthogonality} below), we can deduce that $u_n^J$ defines an approximate solution that obeys global space-time bounds and agrees with $u_n$ at $t=0$.  By stability (Proposition~\ref{P:stab}), this implies space-time bounds for $u_n$ that contradict \eqref{un-condition2}.  

In the case $x_n^j \equiv 0$ and $t_n^j \equiv 0$, we let $v^j$ be the global, scattering solution to \eqref{nls} with initial data $\phi^j$.  If instead $x_n^j\equiv 0$ and $t_n^j\to\pm\infty$, we let $v^j$ be the solution that scatters to $e^{it\Delta}\phi^j$ as $t\to\pm\infty$ (see Proposition~\ref{P:LWP}).  In either of these cases, we then define
\[
v_n^j(t,x) = (\lambda_n^j)^{-\frac12} v^j\left(\tfrac{t}{(\lambda_n^j)^2}+t_n^j,\tfrac{x}{\lambda_n^j}\right).
\]
For the case $\tfrac{|x_n^j|}{\lambda_n^j}\to \infty$, we construct the scattering nonlinear profile $v_n^j$ by appealing to Proposition~\ref{P:embed} below. 

\begin{remark} This final case does not arise in the radial setting and necessitates a new approach in the non-radial case. Indeed, we cannot simply construct the nonlinear solution with data $\phi^j$ and then incorporate the translation into $v_n^j$, as the translation symmetry is broken in \eqref{nls}. Instead, we use an approximation given essentially by solutions to the underlying linear equation.  For the details, see Proposition~\ref{P:embed}. This is the new ingredient needed to extend Theorem~\ref{T:reduction} (and ultimately the main result Theorem~\ref{T}) from the radial to the non-radial case. 
\end{remark}

Having constructed the approximations $u_n^J$ in \eqref{NPD}, it remains to prove 
\begin{equation}\label{NPD-props}
\begin{aligned}
& \limsup_{J\to J^*}\limsup_{n\to\infty}\bigl\{ \|u_n^J\|_{L_t^\infty \dot H_x^1} + \|u_n^J\|_{L_{t,x}^{10}} + \|\nabla u_n^J\|_{L_t^5 L_x^{\frac{30}{11}}}\bigr\}\lesssim 1,\\
& \limsup_{J\to J^*}\limsup_{n\to\infty} \|\nabla[(i\partial_t + \Delta)u_n^J + |x|^{-1}|u_n^J|^2 u_n^J] \|_{N(\R)} = 0. 
\end{aligned}
\end{equation}
Indeed, with \eqref{NPD-props} in hand (along with the agreement of the initial data guaranteed by the linear profile decomposition), we may apply Proposition~\ref{P:stab} to derive bounds for the solutions $u_n$ that contradict \eqref{un-condition2}.  We provide a sketch of the proof of \eqref{NPD-props} in Lemma~\ref{L:decouple} below.  The proof relies essentially on the bounds obeyed by the individual solutions $v_n^j$, the orthogonality of the profiles in the form \eqref{orthogonality}, and the vanishing condition \eqref{vanishing}.

\emph{The proof of (ii)--(iii)}. Once we know that there is a single profile $\phi$ in the decomposition, items (ii)--(iii) quickly follow.  If either $t_n\equiv 0$ or $x_n\equiv 0$ fails, then we can derive bounds for the solutions $u_n$ that contradict \eqref{un-condition2}.  Indeed, if $|\tfrac{x_n}{\lambda_n}|\to\infty$, then Proposition~\ref{P:embed} and Proposition~\ref{P:stab} imply space-time bounds for $u_n$.  If $x_n\equiv 0$ but $t_n\to\infty$, say, then applying Proposition~\ref{P:stab} with the approximate solution $e^{it\Delta}u_{n,0}$ (which has asymptotically vanishing space-time norm) again implies bounds for $u_n$.  The fact that $w_n\to 0$ strongly in $\dot H^1$ follows from energy decoupling, the first condition in \eqref{un-condition1}, and Lemma~\ref{energy-trapping}.  

\smallskip

At this point, we have proven the key convergence statement, and we take $u$ to be the maximal-lifespan solution with initial data given by $\phi$.  That $u$ lies below the ground state threshold follows from the construction and Lemma~\ref{energy-trapping}. The blow-up statement follows from Proposition~\ref{P:stab}.  The compactness statement is derived by applying the convergence argument given above to the sequence $\{u(t_n)\}$ for an arbitrary sequence of times $\{t_n\}$.  Importantly, Proposition~\ref{P:embed} guarantees that profiles with $\tfrac{|x_n|}{\lambda_n}\to\infty$ correspond to \emph{scattering} solutions; it is for this reason that we do not need to contend with a moving spatial center $x(t)$ in the parametrization of our minimal non-scattering solution.  The fact that we can arrange for the frequency scale function to be bounded below, at least on half of the maximal lifespan, follows from the arguments of \cite[Section~4]{KV}. \end{proof}

To complete the proof of Theorem~\ref{T}, it remains to show that the type of solution appearing in the statement of Theorem~\ref{T:reduction} cannot exist.  This is achieved by considering separately the scenarios $T_{\max}<\infty$ (finite-time blowup) and $T_{\max}=\infty$ (which we call the `soliton-like' case).  In the case of finite-time blowup, we can show that the (conserved) mass of the solution is identically zero, which yields a contradiction (see Section~\ref{S:FTBU}).  In the soliton-like case, we instead use the localized virial argument to show that the energy of the solution is identically zero, which again yields a contradiction (see Section~\ref{S:soliton}).  Having precluded both scenarios described in Theorem~\ref{T:reduction}, we complete the proof of Theorem~\ref{T}.

%%%%%
\subsection{Extensions and related problems}  In this section, we discuss a few extensions of Theorem~\ref{T}.  

We first recall from above that the model \eqref{inls} is energy-critical in dimensions $d\geq 3$ whenever $p=\tfrac{4-2b}{d-2}$.  The extension to Theorem~\ref{T} to this more general case should rely primarily on a suitable well-posedness theory for the equation, which becomes more subtle when either $b$ or $d$ increases.  In three dimensions, the authors of \cite{ChoHongLee} addressed the range $0<b<\tfrac43$ in the radial setting (although see Remark~\ref{RonLWP} below).  Utilizing suitable spaces for the local theory together with the new ingredient of this paper (Proposition~\ref{P:embed} below) should allow for the extension of Theorem~\ref{T} to this larger range.  Similarly, the extension to higher dimensions would rest primarily on finding suitable spaces in which to develop the local theory. 

We would also like to remark that the arguments given here suffice to establish global well-posedness and scattering for arbitrary $\dot H^1$-data in the defocusing case, that is:

\begin{theorem}\label{T2} For any $u_0\in \dot H^1(\R^3)$, there exists a unique global solution to
\[
i\partial_tu + \Delta u = |x|^{-1} |u|^2 u
\]
with initial data $u_0$.  Furthermore, this solution obeys global $L_{t,x}^{10}$-bounds and scatters in both time directions. 
\end{theorem}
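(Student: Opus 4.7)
The plan is to run the Kenig--Merle scheme as in the proof of Theorem~\ref{T}, with substantial simplifications afforded by the defocusing sign. In the defocusing regime the energy is
\[
E(u) = \int_{\R^3} \tfrac12|\nabla u|^2 + \tfrac14|x|^{-1}|u|^4\,dx,
\]
which is positive definite, so conservation of energy yields the a priori coercive bound $\|u(t)\|_{\dot H^1}^2 \leq 2E(u_0)$ for any finite-energy datum, with no threshold restriction. Accordingly, the sub-threshold hypothesis \eqref{threshold} and all variational ingredients (in particular Lemma~\ref{energy-trapping}) drop out of the argument entirely.

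First I would observe that the local and small-data theory (Proposition~\ref{P:LWP}), the stability theory (Proposition~\ref{P:stab}), the linear profile decomposition (Proposition~\ref{P:LPD}), and the new embedding Proposition~\ref{P:embed} all carry over \emph{verbatim}, as none of these statements is sensitive to the sign of the nonlinearity. Assuming Theorem~\ref{T2} fails, I would let $E_c \in (0,\infty)$ denote the critical energy separating scattering from non-scattering initial data and run the concentration-compactness reduction analogous to Theorem~\ref{T:reduction}. Absent the sub-threshold machinery, this step becomes cleaner: the only inputs are the profile decomposition, stability, and Proposition~\ref{P:embed}, the last of which again forces any minimal non-scattering solution to have trivial spatial center. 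The outcome is a minimal solution $u:[0,T_{\max})\times\R^3\to\C$ with frequency-scale function $N(t)\geq 1$ such that $\{N(t)^{-1/2}u(t,N(t)^{-1}\cdot)\}$ is precompact in $\dot H^1$.

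To preclude this solution I would follow Section~\ref{S:virial}. The finite-time blowup case ($T_{\max}<\infty$) is handled by the sign-insensitive mass-conservation argument exactly as in the focusing case. For the soliton-like case ($T_{\max}=\infty$), the localized virial argument is \emph{strictly easier} in the defocusing setting: both the kinetic and the nonlinear contributions to $\partial_t^2 V_R(t)$ carry the same positive sign, where $V_R(t) = \int w_R(x)|u(t,x)|^2\,dx$ and $w_R$ is a smoothed truncation of $|x|^2$ at scale $R$. Compactness of the rescaled orbit, together with $N(t)\geq 1$, makes the boundary error terms uniformly negligible for $R$ large, yielding $\partial_t^2 V_R(t) \gtrsim E(u_0)$ on long time intervals while $V_R(t)$ itself stays bounded. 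Integrating twice forces $E(u_0) = 0$, hence $u \equiv 0$, contradicting the non-scattering hypothesis.

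The main technical point is the same as in the focusing case: implementing the truncated virial identity at the critical scaling requires careful control of the boundary contributions, which is made possible by the lower bound $N(t)\geq 1$ together with the precompactness of the rescaled orbit. In the defocusing setting this is the only real obstacle, since there is no variational analysis to perform, no sign competition in the virial, and no need to track the ground-state constant.
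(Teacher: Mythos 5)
Your proposal is essentially the paper's own argument: the paper proves Theorem~\ref{T2} precisely by remarking that the Kenig--Merle machinery developed for the focusing problem (Propositions~\ref{P:LWP}, \ref{P:stab}, \ref{P:LPD}, and the key new ingredient Proposition~\ref{P:embed}, none of which sees the sign of the nonlinearity) carries over, with the variational analysis of Lemmas~\ref{energy-trapping} and \ref{L:coercive} rendered unnecessary by the positive-definite energy, and with the same two rigidity steps (mass conservation via the reduced Duhamel formula for $T_{\max}<\infty$, localized virial for $T_{\max}=\infty$, the latter now with both terms carrying a favorable sign). One technical caveat: you take the virial quantity to be the mass-weighted $V_R(t)=\int w_R|u|^2\,dx$ with $w_R$ a smoothed truncation of $|x|^2$; if $w_R$ is merely constant $\sim R^2$ outside $|x|\sim R$ (the usual truncation), this quantity need not be finite for data that are only in $\dot H^1(\R^3)$, which is exactly why the paper works instead with the momentum-type functional $M_a(t)=2\Im\int \bar u\,\nabla u\cdot\nabla a\,dx$, finite because $\nabla a$ vanishes for $|x|>2R$. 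You should either switch to $M_a$ (integrating once rather than twice) or take $w_R$ compactly supported, in which case $V_R\lesssim R^4\|\nabla u\|_{L^2}^2$ by H\"older and Sobolev embedding and your double-integration argument goes through with the same tightness control of the boundary errors.
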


One can similarly consider the extension of Theorem~\ref{T2} a wider range of energy-critical inhomogeneous nonlinearities. 

\subsection*{Acknowledgements} C.M.G. was partially supported by Coordena\c c\~ao de Aperfei\c coamento de Pessoal de N\'ivel Superior - Brazil (CAPES).

%%%%%%
\section{Preliminaries}\label{S:preliminaries}

We use the standard notation for mixed Lebesgue space-time norms and Sobolev spaces.  We write $A\lesssim B$ to denote $A\leq CB$ for some $C>0$. If $A\lesssim B\lesssim A$, we write $A\sim B$.  We write $A\lesssim_E B$ to denote $A\leq CB$ for some $C=C(E)>0$.  In the proof of Proposition~\ref{P:embed} and in Section~\ref{S:FTBU}, we briefly utilize the standard Littlewood--Paley projections $P_N$ and the associated Bernstein estimates, e.g.
\[
\| |\nabla|^s P_N f\|_{L^r} \sim N^s \|P_N f\|_{L^r}.
\]
We also utilize Hardy's inequality 
\[
\| |x|^{-s} f\|_{L^r(\R^3)} \lesssim \| |\nabla|^s f\|_{L^r(\R^3)},\quad 1<r<\tfrac{3}{s}.
\]

We assume familiarity with the standard Strichartz estimates for the linear Schr\"odinger equation and their use in developing the well-posedness and stability theory for the corresponding nonlinear models.  For pedagogical treatments of these topics, we refer the reader to \cite{KVClay, Cazenave}.

%%%%%%
\subsection{Local well-posedness and stability}

Well-posedness for \eqref{nls} with data in the critical space $\dot H^1$ follows essentially from the arguments of \cite{CazWei}. We refer the reader to \cite[Section~2.1]{ChoHongLee} for the particular case of energy-critical inhomogeneous NLS.  The result we will need is the following. 

\begin{proposition}[Local well-posedness]\label{P:LWP}
\text{ }
\begin{itemize}
\item For any $u_0\in\dot H^1(\R^3)$ and $t_0\in\R$, there exists $T=T(u_0)$ and a unique solution $u:(t_0-T,t_0+T)\times\R^3\to\C$ to \eqref{nls} with $u(t_0)=u_0$. 
\item There exists $\eta_0>0$ so that if 
\[
\|e^{i(t-t_0)\Delta}u_0\|_{L_{t,x}^{10}((t_0,\infty)\times\R^3)}+ \|\nabla e^{i(t-t_0)\Delta}u_0\|_{L_t^5 L_x^{\frac{30}{11}}((t_0,\infty)\times\R^3)}<\eta 
\]
for some $0<\eta<\eta_0$, then the solution to \eqref{nls} with $u(t_0)=u_0$ is forward global and obeys 
\[
\|u\|_{L_{t,x}^{10}((t_0,\infty)\times\R^3)}\lesssim\eta.
\]
The analogous statement holds backward in time or on all of $\R$.
\item For any $\psi\in\dot H^1(\R^3)$, there exist $T>0$ and a solution $u:(T,\infty)\times\R^3\to\C$ to \eqref{nls} obeying $e^{-it\Delta}u(t)\to \psi$ in $\dot H^1$ as $t\to\infty$.  The analogous statement holds backward in time.
\end{itemize}
\end{proposition}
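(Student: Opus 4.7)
The plan is to establish all three parts via the Banach contraction mapping principle applied to the Duhamel map
\begin{equation*}
\Phi(u)(t) := e^{i(t-t_0)\Delta}u_0 + i\int_{t_0}^{t} e^{i(t-s)\Delta}\bigl[|x|^{-1}|u|^2 u\bigr]\,ds,
\end{equation*}
working in the Strichartz-type norm
\begin{equation*}
\|u\|_{X(I)} := \|u\|_{L^{10}_{t,x}(I\times\R^3)} + \|\nabla u\|_{L^5_t L^{30/11}_x(I\times\R^3)}.
\end{equation*}
This is the standard template for $\dot H^1$-critical NLS (cf.\ \cite{CazWei, KVClay}).

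The crucial analytic input is a trilinear estimate of the schematic form
\begin{equation*}
\bigl\|\nabla\bigl[|x|^{-1}|u|^2 u\bigr]\bigr\|_{N(I)} \lesssim \|u\|_{L^{10}_{t,x}(I)}^{2}\,\|\nabla u\|_{L^{5}_t L^{30/11}_x(I)},
\end{equation*}
where $N(I)$ denotes a suitable dual-admissible Strichartz space. Starting from the pointwise bound $\bigl|\nabla(|x|^{-1}|u|^{2}u)\bigr|\lesssim |x|^{-1}|u|^{2}|\nabla u|+|x|^{-2}|u|^{3}$, I would absorb the singular weights via Hardy's inequality (converting $|x|^{-1}$-factors into derivatives on $u$), combined with Sobolev embedding (which upgrades $L^{5}_{t}L^{30/11}_{x}$-control on $\nabla u$ to $L^{5}_{t}L^{30}_{x}$-control on $u$) and H\"older; for the borderline cases one may instead place $|x|^{-1}\in L^{3,\infty}$ and use H\"older in the Lorentz scale. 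Together with the Strichartz inequality, this yields $\|\Phi(u)\|_{X(I)}\lesssim \|u_0\|_{\dot H^1}+\|u\|_{L^{10}_{t,x}(I)}^{2}\|u\|_{X(I)}$, along with an analogous Lipschitz bound for $\Phi(u)-\Phi(v)$.

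Each of the three assertions then reduces to a routine fixed-point argument. For the small-data global bullet, one chooses $\eta_{0}$ small enough that $\Phi$ is a strict contraction on the ball $\{\|u\|_{X((t_0,\infty))}\leq 2C\eta\}$; its fixed point is the claimed global solution with $\|u\|_{L^{10}_{t,x}}\lesssim\eta$. For the local-existence bullet, smallness of $\|u_{0}\|_{\dot H^{1}}$ is unavailable, but by Strichartz together with a standard density and monotone-convergence argument, one chooses $T=T(u_0)>0$ so that $\|e^{i(t-t_{0})\Delta}u_{0}\|_{X(I_T)}<\eta_0$ on $I_T=(t_0-T,t_0+T)$, whereupon the same contraction applies and uniqueness follows from the Lipschitz bound. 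For the wave-operator bullet, one solves the backward Duhamel problem $u(t)=e^{it\Delta}\psi-i\int_t^\infty e^{i(t-s)\Delta}[|x|^{-1}|u|^{2}u]\,ds$ on $(T,\infty)$ for $T$ large, using that $\|e^{it\Delta}\psi\|_{X((T,\infty))}\to 0$ as $T\to\infty$; the Strichartz bound on the Duhamel term over $(t,\infty)$ then forces $\|e^{-it\Delta}u(t)-\psi\|_{\dot H^1}\to 0$.

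The main obstacle is the trilinear estimate: the singular factor $|x|^{-1}$ (and, after differentiation, $|x|^{-2}$) precludes a direct transposition of the usual estimate for the quintic power-type NLS, and one must calibrate Strichartz exponents, Hardy, Hardy--Sobolev, and possibly Lorentz-space H\"older inequalities so that every singular weight is absorbed into a derivative on $u$. Once this bookkeeping is done (and it is by now essentially standard for the energy-critical inhomogeneous NLS, cf.\ \cite{ChoHongLee}), everything else is the familiar fixed-point template.
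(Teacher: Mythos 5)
Your overall template is exactly the paper's: a contraction mapping argument in the norm $\|u\|_{L^{10}_{t,x}}+\|\nabla u\|_{L^5_tL^{30/11}_x}$, with Hardy's inequality used to absorb the singular weight, followed by the standard choices of interval/smallness for the three bullets. However, your key trilinear estimate is mis-stated, and as written it is a step that fails. You claim
\[
\bigl\|\nabla\bigl[|x|^{-1}|u|^2u\bigr]\bigr\|_{N(I)} \lesssim \|u\|_{L^{10}_{t,x}}^{2}\,\|\nabla u\|_{L^5_tL^{30/11}_x},
\]
but the time-integrability bookkeeping cannot produce this: two factors controlled only in $L^{10}_t$ and one in $L^5_t$ (the weight $|x|^{-1}$ contributing no time integrability) give reciprocal time exponent $\tfrac1{10}+\tfrac1{10}+\tfrac15=\tfrac25<\tfrac12$, whereas every dual-admissible space $L^{q'}_tL^{r'}_x$ has $\tfrac1{q'}\geq\tfrac12$; on unbounded time intervals (needed for the global and wave-operator bullets) there is no way to trade down. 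Trying to repair the time exponent by placing $\nabla u$ in a different admissible space, say $L^{10/3}_{t,x}$, breaks on the term $|x|^{-2}|u|^3$: writing it as $(|x|^{-1}u)^2u$, Hardy requires the spatial exponent of each $|x|^{-1}u$ factor to be strictly less than $3$ (hence an admissible pair with $q>4$, incompatible with the time budget), while placing both weights in $L^{3,\infty}_x$ leaves a negative spatial exponent for the remaining $u$ factors. Note also that your estimate, if true, would let the contraction close with smallness of the $L^{10}_{t,x}$ norm of the free evolution alone, which is precisely what the paper flags as apparently impossible (Remark~\ref{RonLWP}); this is a good sanity check that the exponent distribution is wrong.

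The fix is simply to use the correct distribution, which is the paper's Lemma~\ref{L:NLE}:
\[
\bigl\|\nabla\bigl[|x|^{-1}|u|^2u\bigr]\bigr\|_{L^2_tL^{6/5}_x} \lesssim \|u\|_{L^{10}_{t,x}}\,\|\nabla u\|_{L^5_tL^{30/11}_x}^{2},
\]
obtained by H\"older with exponents $\tfrac15+\tfrac1{10}+\tfrac15=\tfrac12$ in time and $\tfrac{11}{30}+\tfrac1{10}+\tfrac{11}{30}=\tfrac56$ in space, together with Hardy in $L^{30/11}_x$ (legitimate since $\tfrac{30}{11}<3$). This estimate is quadratic in the gradient norm, which is exactly why the second bullet assumes smallness of \emph{both} components of the free evolution; with that hypothesis your contraction scheme, the choice of $T$ by density/monotone convergence in the first bullet, and the backward Duhamel construction of wave operators in the third bullet all go through as you outline, and coincide with the paper's argument.
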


The proof is based on Strichartz estimates and the contraction mapping principle.  As these arguments are standard by now, let us just show the main nonlinear estimate (which, in particular, demonstrates how to deal with the presence of the inhomogeneous term in the nonlinearity); see also Remark~\ref{remark:scatter} below. 

\begin{lemma}\label{L:NLE} We have the following nonlinear estimate on any space-time slab $I\times\R^3$:
\[
\| \nabla[|x|^{-1}|u|^2 u]\|_{L_t^2 L_x^{\frac65}} \lesssim \|u\|_{L_{t,x}^{10}} \|\nabla u\|_{L_t^5 L_x^{\frac{30}{11}}}^2 . 
\]
\end{lemma}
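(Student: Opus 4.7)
The plan is to expand the gradient by the product rule into two pieces, namely a term where the derivative hits the weight $|x|^{-1}$ and a term where it hits $|u|^2 u$. Schematically,
\[
\bigl|\nabla[|x|^{-1}|u|^2 u]\bigr| \lesssim |x|^{-2}|u|^3 + |x|^{-1}|u|^2|\nabla u|,
\]
so it suffices to control each of these separately in $L_t^2 L_x^{6/5}$. In each term I will isolate a factor of $|x|^{-1}u$, which may be exchanged for $\nabla u$ via Hardy's inequality in $L^{30/11}$ (the range $1<30/11<3$ is exactly what makes this available), while the remaining factors will be matched against $\|u\|_{L^{10}_x}$ and $\|\nabla u\|_{L^{30/11}_x}$ using H\"older in space and then H\"older in time.

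Concretely, for the second (diagonal) term I would write $|x|^{-1}|u|^2|\nabla u| = (|x|^{-1}u)\cdot u\cdot \nabla u$ and apply H\"older in space with exponents $\tfrac{11}{30}+\tfrac{1}{10}+\tfrac{11}{30}=\tfrac{5}{6}$ to get
\[
\bigl\| |x|^{-1}|u|^2 \nabla u\bigr\|_{L_x^{6/5}} \leq \bigl\| |x|^{-1}u\bigr\|_{L_x^{30/11}}\, \|u\|_{L_x^{10}}\, \|\nabla u\|_{L_x^{30/11}} \lesssim \|u\|_{L_x^{10}}\, \|\nabla u\|_{L_x^{30/11}}^{2},
\]
where the last step is Hardy. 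The weight term $|x|^{-2}|u|^3$ is handled identically, writing it as $(|x|^{-1}u)^2\cdot u$ so that two applications of Hardy yield the same spatial bound. Finally, I would take $L_t^2$ of the result and use H\"older in time with $\tfrac{1}{10}+\tfrac{2}{5}=\tfrac{1}{2}$ to convert $\|u\|_{L_x^{10}}\|\nabla u\|_{L_x^{30/11}}^{2}$ into $\|u\|_{L_{t,x}^{10}}\|\nabla u\|_{L_t^5 L_x^{30/11}}^{2}$.

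There is no real obstacle here beyond bookkeeping: the only substantive input is Hardy's inequality in $L^{30/11}(\R^3)$, and one only needs to verify that $\tfrac{30}{11}$ lies in the admissible range $(1,3)$, which it does. The most delicate point worth double-checking is scale invariance of the final inequality, i.e.\ that each of the three norms appearing scales correctly under $u\mapsto \lambda^{1/2}u(\lambda^2 t,\lambda x)$, which is already forced by the fact that the exponents are chosen to match a standard $\dot H^1$-critical Strichartz pairing.
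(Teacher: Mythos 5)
Your proposal is correct and follows essentially the same route as the paper: product/chain rule to split into the terms $|x|^{-2}|u|^3$ and $|x|^{-1}|u|^2|\nabla u|$, H\"older with the $\tfrac{11}{30}+\tfrac{1}{10}+\tfrac{11}{30}=\tfrac56$ splitting, Hardy's inequality in $L^{30/11}$ to trade $|x|^{-1}u$ for $\nabla u$, and H\"older in time with $\tfrac1{10}+\tfrac25=\tfrac12$. The exponent bookkeeping checks out, so nothing further is needed.
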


\begin{proof} Using the product rule, chain rule, H\"older's inequality, and Hardy's inequality, we obtain
\begin{align*}
\|\nabla[|x|^{-1}|u|^2 u]\|_{L_t^2 L_x^{\frac65}} & \lesssim \| |x|^{-1} u\|_{L_t^5 L_x^{\frac{30}{11}}}\|u\|_{L_{t,x}^{10}}\sum_{T\in\{\nabla,|x|^{-1}\}} \|Tu\|_{L_t^5 L_x^{\frac{30}{11}}} \\
& \lesssim \|u\|_{L_{t,x}^{10}} \|\nabla u\|_{L_t^5 L_x^{\frac{30}{11}}}^2. 
\end{align*}
\end{proof}

\begin{remark}\label{RonLWP} The form of this estimate explains why we require both the $L_{t,x}^{10}$-norm and the $L_t^5 \dot H_x^{1,\frac{30}{11}}$-norm in the statement of Proposition~\ref{P:LWP}(ii) above.  In particular, it seems that one cannot close the well-posedness arguments using only smallness in the $L_{t,x}^{10}$-norm (as it seems to be claimed in \cite[Section~2.1]{ChoHongLee}). 
\end{remark}

Next, we record the following stability result for \eqref{nls}.  The proof once again follows standard arguments (see e.g. \cite{KVClay} for the standard NLS or \cite[Section~2.2]{ChoHongLee} for the inhomogeneous NLS).  
\begin{remark}[Notation]\label{NI} In what follows, we write
\[
\|F\|_{N(I)} \leq A
\]
to indicate that there exists a decomposition $F=\sum_{j=1}^J F_j$ so that 
\[
\min\bigl\{\|F_j\|_{L_t^1 L_x^2(I\times\R^3)},\|F_j\|_{L_t^2 L_x^{\frac65}(I\times\R^3)}\bigr\} \leq \tfrac{A}{J} \qtq{for all}j.
\]
\end{remark}

\begin{proposition}[Stability]\label{P:stab} Suppose $\tilde u:I\times\R^3\to\C$ obeys
\[
\|\tilde u\|_{L_t^\infty \dot H_x^1(I\times\R^3)} + \|\tilde u\|_{L_{t,x}^{10}(I\times\R^3)} \leq E<\infty. 
\]
Then there exists $\eps_1 = \eps_1(E)>0$ such that if
\begin{align*}
\| \nabla\bigl\{(i\partial_t+\Delta)\tilde u + |x|^{-1}|\tilde u|^2\tilde u\bigr\} \|_{N(I)}& \leq \eps<\eps_1,\\
 \|e^{i(t-t_0)\Delta}[u_0-\tilde u|_{t=t_0}]\|_{L_{t,x}^{10}(I\times\R^3)}  & \leq \eps<\eps_1, 
\end{align*}
for some $t_0\in I$ and $u_0\in\dot H^1(\R^3)$ with $\|u_0-\tilde u|_{t={t_0}}\|_{\dot H^1}\lesssim_E 1$, then there exists a unique solution $u:I\times\R^3\to\C$ with $u|_{t=t_0}=u_0$, which satisfies
\[
\|u-\tilde u\|_{L_{t,x}^{10}(I\times\R^3)}\lesssim \eps \qtq{and} \|u\|_{L_t^\infty \dot H_x^1(I\times\R^3)}+\|u\|_{L_{t,x}^{10}(I\times\R^3)}\lesssim_E 1.
\]

\end{proposition}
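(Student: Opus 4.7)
The plan is to follow the standard perturbative stability argument for energy-critical NLS, adapted to accommodate the inhomogeneous factor $|x|^{-1}$ via Lemma~\ref{L:NLE} and its bilinear analogue. Let $w = u - \tilde u$, so $w$ formally satisfies the difference equation
\[
(i\partial_t + \Delta) w = -|x|^{-1}\bigl(|u|^2 u - |\tilde u|^2 \tilde u\bigr) - e, \qquad e := (i\partial_t + \Delta)\tilde u + |x|^{-1}|\tilde u|^2 \tilde u,
\]
with $w|_{t=t_0} = u_0 - \tilde u|_{t=t_0}$. The goal is to show that $w$ exists on all of $I$ and obeys $\|w\|_{L^{10}_{t,x}(I)} \lesssim \eps$ and $\|\nabla w\|_{L^5_t L^{30/11}_x(I)} \lesssim_E 1$ via Strichartz estimates together with a continuity/bootstrap argument.

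The first preparatory step is to upgrade the hypothesis on $\tilde u$ by deriving the additional bound $\|\nabla \tilde u\|_{L^5_t L^{30/11}_x(I)} \lesssim_E 1$. To do this, partition $I$ into finitely many (i.e.\ $K_0 = K_0(E)$ many) subintervals on which $\|\tilde u\|_{L^{10}_{t,x}}$ is small, apply Strichartz to $\tilde u$ on each subinterval, and absorb the nonlinear term using Lemma~\ref{L:NLE}; the error $\nabla e$ contributes harmlessly because of the hypothesis $\|\nabla e\|_{N(I)} \leq \eps$. Summing these subinterval bounds yields the desired global estimate for $\nabla \tilde u$.

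With this in hand, re-partition $I$ into $K = K(E)$ subintervals $I_k = [t_k,t_{k+1}]$ on each of which both $\|\tilde u\|_{L^{10}_{t,x}(I_k)}$ and $\|\nabla \tilde u\|_{L^5_t L^{30/11}_x(I_k)}$ are smaller than a parameter $\eta = \eta(E)$ to be fixed. On each $I_k$, apply the Duhamel formula and Strichartz to the difference equation to obtain
\[
\|\nabla w\|_{S(I_k)} \lesssim \|\nabla w(t_k)\|_{L^2_x} + \bigl\|\nabla\bigl[|x|^{-1}(|u|^2 u - |\tilde u|^2 \tilde u)\bigr]\bigr\|_{N(I_k)} + \|\nabla e\|_{N(I_k)},
\]
where $S(I_k)$ denotes the relevant Strichartz norm (including $L^5_t L^{30/11}_x$). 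The nonlinear difference is handled by a bilinear version of Lemma~\ref{L:NLE}: writing $|u|^2 u - |\tilde u|^2 \tilde u$ as a sum of terms cubic in $w, \tilde u$ and using the product/chain rules, H\"older's inequality, and Hardy's inequality, one obtains a bound of the form
\[
\bigl(\|w\|_{L^{10}_{t,x}} + \|\tilde u\|_{L^{10}_{t,x}}\bigr)^2\bigl(\|\nabla w\|_{L^5_t L^{30/11}_x} + \|\nabla \tilde u\|_{L^5_t L^{30/11}_x}\bigr)
\]
(up to permutations) on each $I_k$. The smallness of $\tilde u$ in the relevant norms on $I_k$, together with a standard continuity argument in $\|w\|_{L^{10}_{t,x}(I_k)}$, closes the estimate and gives
\[
\|\nabla w\|_{S(I_k)} \lesssim \|\nabla w(t_k)\|_{L^2_x} + \|\nabla e\|_{N(I_k)}, \qquad \|w\|_{L^{10}_{t,x}(I_k)} \lesssim \delta_k,
\]
where $\delta_k$ controls the initial difference at $t_k$.

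Iterating across the $K$ subintervals, the initial differences $\delta_k$ accumulate geometrically, yielding a final bound of the form $\|w\|_{L^{10}_{t,x}(I)} + \|\nabla w\|_{S(I)} \lesssim C(E)\eps$. Choosing $\eps_1 = \eps_1(E)$ sufficiently small makes this quantity arbitrarily small, in particular proving existence of $u$ on all of $I$ via a standard continuation argument. The main obstacle is the usual bookkeeping in the iterative step: one must verify that the continuity argument on each $I_k$ is self-consistent (i.e.\ that the bootstrap hypothesis $\|w\|_{L^{10}_{t,x}(I_k)} \leq \eta$ can be recovered from the Strichartz bound) and that the accumulated constants across the $K = K(E)$ subintervals remain finite; this dictates the precise smallness threshold $\eps_1(E)$. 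Aside from the (essentially routine) handling of $|x|^{-1}$ via Hardy's inequality already used in Lemma~\ref{L:NLE}, this is otherwise a direct adaptation of the classical Kenig--Merle stability argument.
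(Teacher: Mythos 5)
Your overall strategy---partition $I$ into $K(E)$ subintervals on which $\tilde u$ is small in $L_{t,x}^{10}$ and $L_t^5\dot H_x^{1,\frac{30}{11}}$, apply Strichartz together with a bilinear version of Lemma~\ref{L:NLE} (Hardy handling the weight) to the difference equation, and iterate---is exactly the standard stability argument that the paper itself invokes only by citation (\cite{KVClay}, \cite[Section~2.2]{ChoHongLee}), so in that sense the approach matches.

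However, the bookkeeping of your iteration, as stated, does not close, and your claimed output $\|w\|_{L_{t,x}^{10}(I)}+\|\nabla w\|_{L_t^5L_x^{30/11}(I)}\lesssim C(E)\eps$ is not attainable under the hypotheses: the data difference is only assumed bounded, $\|u_0-\tilde u(t_0)\|_{\dot H^1}\lesssim_E 1$, so $\|\nabla w\|_{L_t^\infty L_x^2}$ (hence any derivative Strichartz norm of $w$) is merely $O_E(1)$ and cannot be $O(\eps)$; derivative-level smallness is available only under the extra hypothesis that $u_0-\tilde u(t_0)$ is small in $\dot H^1$, which is precisely the content of Remark~\ref{remark:stab}. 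Relatedly, the per-interval claim ``$\|w\|_{L_{t,x}^{10}(I_k)}\lesssim\delta_k$ with $\delta_k$ the initial difference at $t_k$'' overlooks the terms in $\nabla[|x|^{-1}(|u|^2u-|\tilde u|^2\tilde u)]$ where the derivative (or, after Hardy, the weight) lands on $w$ while every other factor is $\tilde u$, e.g. $|x|^{-1}\tilde u^2\nabla w$ and $|x|^{-2}\tilde u^2w$: these are only bounded by $\eta^2\|\nabla w\|_{L_t^5L_x^{30/11}(I_k)}$, carrying no factor of $\eps$ or $\delta_k$, and $\eta$ cannot be taken $\eps$-dependent without destroying the $C^{K(\eta)}$ accumulation. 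The standard argument therefore keeps two ledgers: derivative norms of $w$, propagated only as $O_E(1)$ bounds, and the non-derivative $L_{t,x}^{10}$ smallness, propagated through the hypothesis on the free evolution (writing $e^{i(t-t_k)\Delta}w(t_k)$ as $e^{i(t-t_0)\Delta}w(t_0)$ plus free evolutions of the accumulated Duhamel increments), not through an $\dot H^1$-type ``initial difference.'' You should redo the induction with these roles separated; note also that the strong quantitative smallness of $u-\tilde u$ is only needed in the paper in situations where the data difference is additionally small in $\dot H^1$ (cf. Remark~\ref{remark:stab} and the applications in Proposition~\ref{P:embed} and Theorem~\ref{T:reduction}), which is where the delicate terms above become harmless.
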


\begin{remark}\label{remark:stab} If we additionally assume that $u_0-\tilde u|_{t=t_0}$ is small in $\dot H^1$, then we can upgrade the estimate on $u-\tilde u$ to hold in $L_t^q \dot H_x^{1,r}$ for any Strichartz admissible pair $(q,r)$. 
\end{remark}

\begin{remark}\label{remark:scatter} Solutions to \eqref{nls} with global $L_t^\infty \dot H_x^1\cap L_{t,x}^{10}$ bounds can be seen to scatter as $t\to\pm\infty$ as follows.  We write $E=\|u\|_{L_t^\infty \dot H_x^1}$ and let $\eps=\eps(E)>0$ be a small parameter to be specified below.  We then split $\R$ into finitely many time intervals such that the $L_{t,x}^{10}$ norm is of size $\eps$ on each interval.  We first prove that the $L_t^5 \dot H_x^{1,\frac{30}{11}}$-norm is controlled by $2CE$ on any such interval, say $I=[t_0,t_1]$.  (This in turn yields global $L_t^5 \dot H_x^{1,\frac{30}{11}}$ control.) Control on each interval follows from a continuity argument using Strichartz, the nonlinear estimate in Lemma~\ref{L:NLE}, and choosing $\eps=\eps(E)$ sufficiently small, viz. 
\[
\|\nabla u\|_{L_t^5 L_x^{\frac{30}{11}}([t_0,\tau]\times\R^3)} \leq CE + C\eps \|\nabla u\|_{L_t^5 L_x^{\frac{30}{11}}([t_0,\tau]\times\R^3)}^2.
\]
for any $\tau\in[t_0,t_1]$. With global space-time bounds in both $L_{t,x}^{10}$ and $L_t^5 \dot H_x^{1,\frac{30}{11}}$, it is not difficult to prove that $\{e^{-it\Delta}u(t)\}$ is Cauchy in $\dot H^1$ as $t\to\pm\infty$ by once again appealing to Strichartz and the nonlinear estimate Lemma~\ref{L:NLE}. \end{remark}

%%%
\subsection{Variational analysis}  The ground state
\[
Q(x) = (1+\tfrac12|x|)^{-1}
\]
solves the elliptic equation 
\begin{equation}\label{elliptic}
\Delta Q + |x|^{-1} Q^3 = 0
\end{equation}
and optimizes the embedding estimate
\begin{equation}\label{embedding-est}
\| |x|^{-1}|u|^4\|_{L^1} \leq C_1 \|\nabla u\|_{L^2}^4
\end{equation}
(see e.g. \cite{Yanagida}).

Multiplying \eqref{elliptic} by $Q$ and integrating by parts yields
\[
\| \nabla Q\|_{L^2}^2 = \||x|^{-1}Q^4\|_{L^1} = \tfrac{8\pi}{3},
\]
where the final equality follows from direct calculation.  This leads to the following useful identities:
\begin{equation}\label{ywmtbe}
C_1 = \tfrac{3}{8\pi},\quad E(Q) = \tfrac{2\pi}{3}.
\end{equation}

The following lemma shows that solutions to \eqref{nls} that initially obey the sub-threshold condition \eqref{threshold} continue to do so throughout their lifespan.  Furthermore, the energy is coercive in this regime.

\begin{lemma}[Energy trapping]\label{energy-trapping} Suppose that $u:I\times\R^3\to\C$ is a solution to \eqref{nls} obeying \eqref{threshold}.  Then there exists $\delta>0$ such that
\begin{equation}\label{quant-below}
\sup_{t\in I}\|u(t)\|_{\dot H^1}<(1-\delta)\|Q\|_{\dot H^1}.
\end{equation}
Furthermore,
\begin{equation}\label{energy-sim}
 E(u(t))\sim \|u(t)\|_{\dot H^1}^2\sim \|u_0\|_{\dot H^1}^2\qtq{for all}t\in I.
\end{equation}
\end{lemma}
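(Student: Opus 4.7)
The plan is a standard variational/energy-trapping argument built around the one-variable function
\[
f(y) = \tfrac12 y - \tfrac{C_1}{4} y^2,
\]
where $C_1 = \tfrac{3}{8\pi}$ is the sharp constant in \eqref{embedding-est} and $y$ will play the role of $\|\nabla u(t)\|_{L^2}^2$. From $C_1 = \tfrac{3}{8\pi}$ and $\|\nabla Q\|_{L^2}^2 = \tfrac{8\pi}{3}$, the maximum of $f$ occurs at $y^* = 1/C_1 = \|\nabla Q\|_{L^2}^2$, with maximal value $f(y^*) = \tfrac{1}{4C_1} = \tfrac{2\pi}{3} = E(Q)$, and $f$ is strictly increasing on $[0,y^*]$.

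First I would use \eqref{embedding-est} to bound the negative part of the energy, which gives the pointwise-in-time lower bound
\[
E(u(t)) \geq \tfrac12\|u(t)\|_{\dot H^1}^2 - \tfrac{C_1}{4}\|u(t)\|_{\dot H^1}^4 = f\bigl(\|u(t)\|_{\dot H^1}^2\bigr).
\]
Setting $y(t) = \|u(t)\|_{\dot H^1}^2$, conservation of energy gives $f(y(t)) \leq E(u(t)) = E(u_0)$. By \eqref{threshold} we have $y(0) < y^*$ and $E(u_0) < E(Q) = f(y^*)$, so $y(0)$ lies in the increasing branch of $f$ below a level set that is strictly lower than the peak. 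Since local well-posedness puts $u \in C_t(\dot H^1)$, the function $y(t)$ is continuous on $I$, and a standard continuity argument (if $y(t)$ ever reached $y^*$ there would be a time where $f(y(t))\geq f(y^*) = E(Q) > E(u_0)$, contradicting conservation) pins $y(t)$ to the left branch for all $t \in I$. Quantitatively, choosing $\delta > 0$ so that $E(u_0) \leq (1-\delta)^2 E(Q)$ and inverting $f$ on $[0,y^*]$ yields the uniform bound \eqref{quant-below}.

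Once \eqref{quant-below} is in place, the coercivity \eqref{energy-sim} follows by elementary manipulation: for $y\leq (1-\delta)y^*$ one has
\[
f(y) = \tfrac{y}{2}\bigl(1 - \tfrac{C_1 y}{2}\bigr) \geq \tfrac{y}{2}\cdot\tfrac{1+\delta}{2} \sim y,
\]
so together with the trivial upper bound $E(u(t)) \leq \tfrac12\|u(t)\|_{\dot H^1}^2$ we obtain
\[
E(u(t)) \sim \|u(t)\|_{\dot H^1}^2
\]
for all $t \in I$. Applying this at $t=0$ and using conservation $E(u(t)) = E(u_0)$ then chains through to $\|u(t)\|_{\dot H^1}^2 \sim \|u_0\|_{\dot H^1}^2$, completing \eqref{energy-sim}.

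There is no real obstacle here; the only subtle point is invoking continuity of $t \mapsto \|u(t)\|_{\dot H^1}$ so that the trajectory $y(t)$ cannot jump across the peak of $f$, which is justified by Proposition~\ref{P:LWP} placing $u$ in $C_t\dot H^1_x$ on its lifespan. Everything else is a one-variable calculus computation with $f$ together with conservation of energy.
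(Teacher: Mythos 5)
Your proposal is correct and follows essentially the same route as the paper: the sharp constant $C_1=\tfrac{3}{8\pi}$ in \eqref{embedding-est} gives the lower bound $E\geq \tfrac12 y-\tfrac{C_1}{4}y^2$ with $y=\|u(t)\|_{\dot H^1}^2$, and the quadratic analysis of this one-variable function together with conservation of energy and continuity of $t\mapsto\|u(t)\|_{\dot H^1}$ yields \eqref{quant-below}, while the elementary bound $E\geq\tfrac14 y$ for $y\leq\|Q\|_{\dot H^1}^2$ plus the trivial upper bound gives \eqref{energy-sim}. The only cosmetic difference is that you phrase the argument via the increasing branch of $f$ rather than the paper's completed-square inequality $(y-\tfrac{8\pi}{3})^2>\tfrac{64\pi^2}{9}\delta_0$, which is the same computation.
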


\begin{proof} We first suppose $f\in \dot H^1$ satisfies
\begin{equation}\label{f-is-below}
E(f) < (1-\delta_0)E(Q) = (1-\delta_0)\tfrac{2\pi}{3} \qtq{and} \|f\|_{\dot H^1}^2 \leq \|Q\|_{\dot H^1}^2 = \tfrac{8\pi}{3}. 
\end{equation}
for some $\delta_0>0$.   By the sharp inequality \eqref{embedding-est} and \eqref{ywmtbe}, we may also write
\begin{equation}\label{energy-lb}
E(f) \geq \tfrac12 \|f\|_{\dot H^1}^2 - \tfrac{3}{32\pi}\|f\|_{\dot H^1}^4. 
\end{equation}
Thus, with $y=\|f\|_{\dot H^1}^2$, we have
\[
y^2-\tfrac{16\pi}{3}y + \tfrac{64\pi^2}{9}  = (y-\tfrac{8\pi}{3})^2 > \tfrac{64\pi^2}{9}\delta_0, \qtq{while} y\leq \tfrac{8\pi}{3}.
\]
It follows that $y< \tfrac{8\pi}{3}-\delta_1$ for some $\delta_1$.  Using this, continuity of the flow in $\dot H^1$, and conservation of energy, we deduce \eqref{quant-below}. 

For \eqref{energy-sim}, it suffices (by the conservation of energy) to show that if $f\in \dot H^1$ is as above, then $E(f)\gtrsim \|f\|_{\dot H^1}^2$.  In fact, using $\|f\|_{\dot H^1}^2\leq\tfrac{8\pi}{3}$ and \eqref{energy-lb}, we have
\[
E(f) \geq \|f\|_{\dot H^1}^2\{\tfrac12 - \tfrac{3}{32\pi}\cdot\tfrac{8\pi}{3}\}\geq \tfrac14\|f\|_{\dot H^1}^2. 
\]
\end{proof}

Finally, the next lemma shows that the sub-threshold assumption in \eqref{threshold} yields the coercivity needed to run the virial argument.  In particular, the quantity appearing in the lemma is precisely the functional that shows up in the time derivative of the virial quantity (see Section~\ref{S:soliton} below).

\begin{lemma}[Coercivity]\label{L:coercive} Suppose that $u:I\times\R^3\to\C$ is a solution to \eqref{nls} obeying \eqref{threshold}.  Then there exists $\delta>0$ such that
\[
\int_{\R^3} |\nabla u(t,x)|^2 - |x|^{-1}|u(t,x)|^4\,dx \geq \delta \int_{\R^3} |\nabla u(t,x)|^2\,dx \qtq{uniformly over}t\in I. 
\]
\end{lemma}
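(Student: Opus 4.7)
The plan is to combine the sharp embedding inequality \eqref{embedding-est} with the quantitative sub-threshold bound \eqref{quant-below} from Lemma~\ref{energy-trapping}. The key numerical identity driving everything is $C_1 \|Q\|_{\dot H^1}^2 = \tfrac{3}{8\pi}\cdot\tfrac{8\pi}{3}=1$, which means that the sharp embedding constant is tuned precisely to the kinetic energy of the ground state. Thus any strict kinetic-energy deficit below $\|Q\|_{\dot H^1}^2$ translates into a strict deficit in the ratio between the nonlinear term and the linear term in the functional of interest.

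Concretely, I would first apply Lemma~\ref{energy-trapping} to obtain $\delta_0>0$ such that $\|u(t)\|_{\dot H^1}^2 \leq (1-\delta_0)^2\|Q\|_{\dot H^1}^2 = (1-\delta_0)^2\cdot\tfrac{8\pi}{3}$ uniformly in $t\in I$. Then for each fixed $t$ I would use \eqref{embedding-est} and \eqref{ywmtbe} to estimate
\[
\int_{\R^3} |x|^{-1}|u(t,x)|^4\,dx \leq C_1 \|\nabla u(t)\|_{L^2}^4 = \tfrac{3}{8\pi}\|\nabla u(t)\|_{L^2}^2 \cdot \|\nabla u(t)\|_{L^2}^2 \leq (1-\delta_0)^2 \|\nabla u(t)\|_{L^2}^2,
\]
where in the last step I pulled out one factor of $\|\nabla u(t)\|_{L^2}^2$ and used the energy-trapping bound on the remaining factor. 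Subtracting this from $\int|\nabla u(t)|^2\,dx$ and setting $\delta := 1-(1-\delta_0)^2 > 0$ yields the desired coercivity.

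There is no real obstacle here: the argument is purely algebraic once Lemma~\ref{energy-trapping} is in hand, and it relies only on (i) the sharp constant $C_1=\tfrac{3}{8\pi}$ from \eqref{ywmtbe} and (ii) the fact that the sub-threshold hypothesis \eqref{threshold} upgrades to the strict quantitative bound \eqref{quant-below}. The only thing worth being careful about is bookkeeping of the two $\delta$'s (the one coming from energy trapping and the one appearing in the conclusion), but the relation $\delta = 1-(1-\delta_0)^2$ makes positivity immediate.
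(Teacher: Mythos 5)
Your proposal is correct and follows essentially the same route as the paper: apply the quantitative sub-threshold bound \eqref{quant-below} from Lemma~\ref{energy-trapping} together with the sharp inequality \eqref{embedding-est} and the constant $C_1=\tfrac{3}{8\pi}$ from \eqref{ywmtbe}, so that one factor of $\|\nabla u(t)\|_{L^2}^2$ is peeled off and bounded strictly below $\|Q\|_{\dot H^1}^2=\tfrac{8\pi}{3}$. The only cosmetic difference is your bookkeeping $\delta=1-(1-\delta_0)^2$ versus the paper's direct use of $(1-\delta)$ on the squared norm, which is immaterial.
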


\begin{proof} We suppose $f\in \dot H^1(\R^3)$ satisfies \eqref{f-is-below} as above. Then, by the previous lemma, we have
\[
\|f\|_{\dot H^1}^2 \leq (1-\delta)\|Q\|_{\dot H^1}^2 = (1-\delta)\tfrac{8\pi}{3}\qtq{for some}\delta>0.
\]
By the sharp inequality \eqref{embedding-est}, we therefore obtain
\[
\|f\|_{\dot H^1}^2 - \||x|^{-1} |f|^4\|_{L^1} \geq \|f\|_{\dot H^1}^2\{1-\tfrac{3}{8\pi}\|f\|_{\dot H^1}^2\} \geq \delta \|f\|_{\dot H^1}^2.
\]
In light of Lemma~\ref{energy-trapping}, this yields the desired result. \end{proof}

%%%%%%
\section{Reduction to minimal non-scattering solutions}\label{S:CC} 

This section contains several results related to the construction of minimal non-scattering solutions, given as Theorem~\ref{T:reduction} above.  In particular, this section contains the essential new ingredient of this work, namely, Proposition~\ref{P:embed} below. 

We begin with a bit of notation.  For $x_n\in\R^3$ and $\lambda_n\in(0,\infty)$, we define
\[
g_n\phi(x) = \lambda_n^{-\frac12}\phi(\tfrac{x-x_n}{\lambda_n}). 
\]
For later use, we observe that
\begin{equation}\label{gc}
g_n e^{it\Delta} = e^{i\lambda_n^2 t\Delta}g_n.
\end{equation}

The main concentration-compactness tool that we need is the following linear profile decomposition of \cite{Keraani}.  The particular version we need (and its proof) can be found in \cite[Theorem~4.1]{Visan}.	

\begin{proposition}[Linear profile decomposition]\label{P:LPD}
Let $u_n$ be a bounded sequence in $\dot H^1(\R^3)$. Then the following holds up to a subsequence:

There exist $J^*\in\mathbb{N}\cup\{\infty\}$; profiles $\phi^j\in \dot H^1\backslash\{0\}$; scales $\lambda_n^j\in(0,\infty)$; space translation parameters $x_n^j\in\R^3$; time translation parameters $t_n^j$; and remainders $w_n^J$ so that writing
\[
g_n^j f(x) = (\lambda_n^j)^{-\frac12} f(\tfrac{x-x_n^j}{\lambda_n^j}),
\]
we have the following decomposition for $1\leq J\leq J^*$:
\[
u_n = \sum_{j=1}^J g_n^j[e^{it_n^j\Delta}\phi^j] + w_n^J.
\]
This decomposition satisfies the following conditions:
\begin{itemize}
\item Energy decoupling: writing $P(u)=\| |x|^{-1} |u|^4\|_{L^1}$, we have
\begin{equation}\label{energy-decoupling}
\begin{aligned}
&\lim_{n\to\infty} \bigl\{ \|\nabla u_n\|_{L^2}^2 - \sum_{j=1}^J \|\nabla \phi^j \|_{L^2}^2 - \|\nabla w_n^J\|_{L^2}^2\bigr\} = 0, \\
&\lim_{n\to\infty} \bigl\{P(u_n) - \sum_{j=1}^J P(e^{it_n^j\Delta}\phi^j)-P(w_n^J)\bigr\} = 0.
\end{aligned}
\end{equation}
\item Asymptotic vanishing of remainders:
\begin{equation}\label{vanishing}
\limsup_{J\to J^*}\limsup_{n\to\infty} \|e^{it\Delta}w_n^J\|_{L_{t,x}^{10}(\R\times\R^3)} = 0. 
\end{equation}
\item Asymptotic orthogonality of parameters: for $j\neq k$, 
\begin{equation}\label{orthogonality}
\lim_{n\to\infty} \biggl\{\log\bigl[\tfrac{\lambda_n^j}{\lambda_n^k}\bigr] + \tfrac{|x_n^j-x_n^k|^2}{\lambda_n^j\lambda_n^k} + \tfrac{|t_n^j(\lambda_n^j)^2-t_n^k(\lambda_n^k)^2|}{\lambda_n^j\lambda_n^k}\biggr\} = \infty.
\end{equation}
\end{itemize}
In addition, we may assume that either $t_n^j\equiv 0$ or $t_n^j\to\pm\infty$, and that either $x_n^j\equiv 0$ or $|x_n^j|\to\infty$. 
\end{proposition}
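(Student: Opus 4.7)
The plan is to follow the standard Bahouri--G\'erard/Keraani/Visan induction scheme: extract concentration bubbles one at a time, driven by a refined Strichartz estimate. The essential ingredient is an inequality of the form
\[
\|e^{it\Delta}f\|_{L^{10}_{t,x}(\R\times\R^3)}\lesssim \|f\|_{\dot H^1}^{\theta}\,\sup_{N\in 2^{\mathbb{Z}}}\|e^{it\Delta}P_N f\|_{L^{10}_{t,x}}^{1-\theta}
\]
(or a closely related refinement involving a Besov norm), which forces the existence of a dyadic frequency $N_n$ at which $e^{it\Delta}u_n$ concentrates whenever the Strichartz norm is nontrivial. This is proved by interpolation between Littlewood--Paley square function estimates and Bernstein inequalities.

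The inductive construction then proceeds as follows. Set $w_n^0 = u_n$. Given $w_n^{J-1}$, if $\|e^{it\Delta}w_n^{J-1}\|_{L^{10}_{t,x}}\to 0$ we stop and set $J^* = J-1$. Otherwise the refined Strichartz estimate produces, after passing to a subsequence, parameters $\lambda_n^J\in(0,\infty)$, $x_n^J\in\R^3$, $t_n^J\in\R$, and a nontrivial profile $\phi^J\in\dot H^1$ defined as the weak $\dot H^1$-limit of $(g_n^J)^{-1}e^{-it_n^J\Delta}w_n^{J-1}$, with $\|\phi^J\|_{\dot H^1}$ bounded below by a positive power of the Strichartz norm of $w_n^{J-1}$ (over its $\dot H^1$ norm). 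Define $w_n^J := w_n^{J-1}-g_n^J[e^{it_n^J\Delta}\phi^J]$ and iterate. Further subsequence extractions allow us to normalize $t_n^j\equiv 0$ or $|t_n^j|\to\infty$, and $x_n^j\equiv 0$ or $|x_n^j|\to\infty$, by absorbing bounded parameters into $\phi^j$ via the unitarity of $e^{it\Delta}$ and of $g_n^j$ on $\dot H^1(\R^3)$.

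Verifying the asymptotic properties is largely standard. The $\dot H^1$ decoupling in \eqref{energy-decoupling} follows inductively from the weak convergence, using that $(g_n^J)^{-1}e^{-it_n^J\Delta}$ is an isometry on $\dot H^1(\R^3)$, which relies crucially on the three-dimensional critical scaling of $g_n^j$. Asymptotic orthogonality \eqref{orthogonality} is proved by contradiction: if it fails for some pair $(j,k)$ with $j<k$, then tracking how $\phi^j$ transforms under $(g_n^k)^{-1}e^{-it_n^k\Delta}$ forces the weak limit defining $\phi^k$ to vanish, contradicting its nontriviality. The vanishing \eqref{vanishing} follows because each extracted profile contributes a quantitative amount to $\|u_n\|_{\dot H^1}^2$, so the $\dot H^1$ decoupling caps the number of profiles that can be extracted above a given threshold, and hence drives the Strichartz norms of the remainders to zero as $J\to J^*$.

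The main obstacle is the decoupling of the inhomogeneous potential functional $P(u) = \||x|^{-1}|u|^4\|_{L^1}$, since the weight $|x|^{-1}$ is not translation invariant. I would argue by cases, based on which parameter drives orthogonality between distinct profiles: if scales separate, a H\"older/Hardy argument after unscaling yields the decoupling; if space translations separate at a common scale, the $L^4$ mass of the profiles concentrates on disjoint sets; and if time translations separate, the dispersive estimate drives the profile density to zero on any fixed compact region. A Brezis--Lieb-type argument then assembles these ingredients into the full decoupling of $P$, completing the proof.
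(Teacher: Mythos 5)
Your overall scheme is the same one the paper relies on: the paper offers no proof of Proposition~\ref{P:LPD}, citing Keraani and \cite[Theorem~4.1]{Visan} and adding only the remark that ``the same arguments'' give the decoupling of $P(u)=\| |x|^{-1}|u|^4\|_{L^1}$, and your refined/inverse Strichartz induction is precisely that standard argument; the extraction, the $\dot H^1$ decoupling, the orthogonality-by-contradiction, and the vanishing of remainders are all described correctly.

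The gap sits exactly at the one genuinely new point, which you pass over too quickly. Since $P$ is invariant under the critical rescaling but \emph{not} under translations, a change of variables gives $P(g_n^j[e^{it_n^j\Delta}\phi^j])=\int |y+\tfrac{x_n^j}{\lambda_n^j}|^{-1}|e^{it_n^j\Delta}\phi^j(y)|^4\,dy$, which equals $P(e^{it_n^j\Delta}\phi^j)$ only when $x_n^j=0$ and tends to $0$ when $|x_n^j|/\lambda_n^j\to\infty$. Your case analysis (disjoint supports, separated scales, dispersive decay, Brezis--Lieb) only shows that the \emph{cross} terms vanish, i.e.\ it yields decoupling with $P(g_n^j[e^{it_n^j\Delta}\phi^j])$ in the sum; to reach the stated form \eqref{energy-decoupling} you must in addition evaluate these diagonal terms and record that the potential energy of a far-translated bubble asymptotically vanishes (this is also what the rest of the paper actually uses: such profiles carry purely kinetic energy and are handled by Proposition~\ref{P:embed}). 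Relatedly, your normalization step does not work as written: unitarity of $e^{it\Delta}$ and $g_n^j$ lets you absorb a bounded \emph{time} parameter, but a bounded $x_n^j$ can be absorbed into $\phi^j$ only when $x_n^j/\lambda_n^j$ converges (one needs continuity of translations in $\dot H^1$ applied to the shift by $x_n^j/\lambda_n^j$). After passing to a subsequence, the dichotomy you can actually arrange---and the one the nonlinear profile construction needs, matching the hypothesis $|x_n|/\lambda_n\to\infty$ of Proposition~\ref{P:embed}---is $x_n^j\equiv 0$ versus $|x_n^j|/\lambda_n^j\to\infty$, rather than $|x_n^j|\to\infty$; your proof should be phrased in these terms.
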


\begin{remark} In the usual linear profile decomposition used in the analysis of the $3d$ energy-critical NLS, the potential energy decoupling in \eqref{energy-decoupling} is given in terms of the $L^6$-norm.  However, the same arguments suffice to establish decoupling for the functional appearing in \eqref{energy-decoupling}. 
\end{remark}

The next result is the key result of this paper.  It establishes the existence of scattering solutions to \eqref{nls} associated to initial data living sufficiently far from the origin (relative to the length scale associated to the data).  This result ultimately allows us to extend the construction of minimal blowup solutions from the radial to the non-radial setting; moreover, it guarantees that the compact solutions we ultimately construct remain localized near the origin, which facilitates the use of the localized virial argument. 

Before stating the proposition, let us introduce the following notation:
\[
\|f\|_{\dot S^1(\R)} := \|f\|_{L_{t,x}^{10}(\R\times\R^3)} + \| \nabla f\|_{L_t^{5} L_x^{\frac{30}{11}}(\R\times\R^3)}. 
\]

\begin{proposition}\label{P:embed} Let $\lambda_n \in (0,\infty)$, $x_n\in \R^3$, and $t_n\in\R$ satisfy
\[
\lim_{n\to\infty} \tfrac{|x_n|}{\lambda_n} = \infty \qtq{and} t_n\equiv 0 \qtq{or} t_n\to\pm\infty. 
\]
Let $\phi\in \dot H^1(\R^3)$ and define
\[
\phi_n(x) = g_n [e^{it_n\Delta}\phi](x) = \lambda_n^{-\frac12}[e^{it_n\Delta}\phi](\tfrac{x-x_n}{\lambda_n}). 
\]
Then for all $n$ sufficiently large, there exists a global solution $v_n$ to \eqref{nls} satisfying
\[
v_n(0) = \phi_n \qtq{and}\|v_n\|_{\dot S^1(\R)} \lesssim 1,
\] 
with implicit constant depending only on $\|\phi\|_{\dot H^1}$. 

Furthermore, for any $\eps>0$ there exists $N\in\mathbb{N}$ and $\psi\in C_c^\infty(\R\times\R^3)$ so that for $n\geq N$, we have
\[
\|\lambda_n^{\frac12} v_n(\lambda_n^2(t-t_n),\lambda_n x+x_n)-\psi\|_{\dot S^1(\R)} < \eps.
\]
%\[
%\|v_n(t-\lambda_n^2 t_n,x+x_n)-\lambda_n^{-\frac12}\psi(\lambda_n^{-2}t,\lambda_n^{-1}x)\|_{\dot S^1(\R)}<\eps,
%\]
\end{proposition}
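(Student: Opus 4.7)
The plan is to pull back to the rescaled frame, where the problem reduces to a small perturbation of the free linear evolution, and then invoke the stability theorem. Setting $z_n:=-x_n/\lambda_n$ (so $|z_n|\to\infty$) and
\[
V_n(t,y):=\lambda_n^{\frac12} v_n\bigl(\lambda_n^2(t-t_n),\lambda_n y+x_n\bigr),
\]
the sought $v_n$ corresponds to $V_n$ solving
\[
i\partial_t V_n+\Delta V_n+|y-z_n|^{-1}|V_n|^2V_n=0,\qquad V_n(t_n)=e^{it_n\Delta}\phi.
\]
This rescaling is a $\dot S^1(\R)$-isometry, so it suffices to produce a global $V_n$ with $\|V_n-U\|_{\dot S^1(\R)}\to 0$, where $U(t,y):=e^{it\Delta}\phi(y)$; the final claim of the proposition reduces to the fact that $U$ lies in the $\dot S^1$-closure of $C_c^\infty(\R\times\R^3)$.

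I would construct $V_n$ by Proposition~\ref{P:stab} with approximate solution $\tilde V_n\equiv U$. By Strichartz, $\|U\|_{L_t^\infty\dot H_x^1}+\|U\|_{\dot S^1(\R)}\lesssim\|\phi\|_{\dot H^1}$, and the initial data for $V_n$ and $U$ agree exactly at $t=t_n$, so the Strichartz initial-data error vanishes identically. The proof of Lemma~\ref{L:NLE} is translation-invariant (Hardy's inequality holds after centering at any point), so the same nonlinear estimate underlies a translation-invariant analogue of Proposition~\ref{P:stab} for the rescaled equation, and the only remaining hypothesis is the equation-error bound
\[
\bigl\|\nabla\bigl[|y-z_n|^{-1}|U|^2U\bigr]\bigr\|_{N(\R)}\longrightarrow 0\qtq{as}n\to\infty.
\]
Granting this, stability yields $V_n$ with $\|V_n-U\|_{\dot S^1(\R)}\to 0$; unwinding gives $v_n$ with $\|v_n\|_{\dot S^1(\R)}\lesssim_{\|\phi\|_{\dot H^1}} 1$, and combining with any spacetime-compact $\psi$ close to $U$ in $\dot S^1(\R)$ delivers the final approximation statement.

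The key step is proving the equation-error decay above, which is where the hypothesis $|z_n|\to\infty$ is used. Given $\eta>0$, I would pick $\psi_0\in C_c^\infty(\R^3)$ with $\|\phi-\psi_0\|_{\dot H^1}<\eta$, and then $\psi\in C_c^\infty(\R\times\R^3)$ with $\|e^{it\Delta}\psi_0-\psi\|_{\dot S^1(\R)}<\eta$; the latter approximation is available because $e^{it\Delta}\psi_0$ is smooth in $(t,y)$ with Schwartz spatial profile and dispersive time decay. Strichartz gives $\|U-\psi\|_{\dot S^1(\R)}\lesssim\eta$, so splitting
\[
|y-z_n|^{-1}|U|^2U=|y-z_n|^{-1}|\psi|^2\psi+\bigl(|y-z_n|^{-1}|U|^2U-|y-z_n|^{-1}|\psi|^2\psi\bigr),
\]
the $\nabla N(\R)$-norm of the difference term is $\lesssim\eta$ uniformly in $n$ by H\"older, the chain rule, and Hardy's inequality centered at $z_n$ (the translation-invariant form of Lemma~\ref{L:NLE}). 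For the main term, $\psi$ has fixed support in some $[-T,T]\times\{|y|\leq R\}$; once $|z_n|\geq 2R$ we have $|y-z_n|\geq|z_n|/2$ on $\supp\psi$, so $|y-z_n|^{-1}=O(|z_n|^{-1})$ there, and a direct computation gives $\|\nabla[|y-z_n|^{-1}|\psi|^2\psi]\|_{N(\R)}\to 0$. Sending first $n\to\infty$ and then $\eta\to 0$ establishes the claim.

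The main obstacle is precisely this equation-error estimate: although $|y-z_n|^{-1}$ has a moving singularity and is not pointwise small, a spacetime-compact approximation $\psi$ of $U$ separates $\supp\psi$ from $y=z_n$ for large $n$, making the weight uniformly $O(|z_n|^{-1})$ on the relevant region. Once this decay holds, the stability theorem yields $V_n$ close to $U$, and density of $C_c^\infty$ in the $\dot S^1$-closure of smooth linear solutions yields the final $\psi$-approximation.
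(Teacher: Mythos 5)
Your proposal is correct, and it implements the same underlying idea as the paper (approximate $v_n$ by an essentially linear evolution and conclude via the stability theorem), but the technical route is genuinely different. The paper builds the approximate solution $\tilde v_{n,T}=g_n[\chi_n P_n e^{i(\lambda_n^{-2}t+t_n)\Delta}\phi]$ with a frequency cutoff $P_n$ at scales $|x_n/\lambda_n|^{\pm\theta}$, a spatial cutoff $\chi_n$ keeping the support away from the singularity of the weight, and a three-piece time decomposition $I_{n,T}$, $I_{n,T}^\pm$; the error is then estimated directly via Bernstein, the derivative bounds on $\chi_n$, and the length of $I_{n,T}$, which yields explicit rates in powers of $|x_n/\lambda_n|$, and the matching of data at $t=0$ requires a separate dominated-convergence argument. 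You instead take the bare linear flow $U=e^{it\Delta}\phi$ (equivalently, $e^{it\Delta}\phi_n$ in the original frame, by \eqref{gc}) as the global-in-time approximate solution, so the data match exactly at $t=t_n$ and the only thing to check is that $\|\nabla[|y-z_n|^{-1}|U|^2U]\|_{N(\R)}\to 0$; you obtain this by approximating $U$ in $\dot S^1$ by a compactly supported $\psi$, absorbing the difference with the translation-uniform Hardy inequality in the trilinear estimate of Lemma~\ref{L:NLE}, and using that $|y-z_n|^{-1}\lesssim|z_n|^{-1}$ on the fixed support of $\psi$. In effect, the compact-support approximation (which the paper needs anyway for the final $C_c^\infty$ statement, following \cite{KMVZZ}) replaces the paper's cutoff bookkeeping, so your argument is somewhat cleaner and makes the ``limiting model is linear'' heuristic transparent, at the cost of giving no quantitative rate in $|x_n/\lambda_n|$. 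Two small points to make explicit when writing it up: the passage from the $L_{t,x}^{10}$ conclusion of Proposition~\ref{P:stab} to convergence in the full $\dot S^1(\R)$ norm uses Remark~\ref{remark:stab} (legitimate here since the data difference is identically zero), and the ``translation-invariant analogue'' of Proposition~\ref{P:stab} can be avoided altogether by applying the stated proposition in the original frame with $\tilde u=e^{it\Delta}\phi_n$, since the error and all norms involved are invariant under the rescaling.
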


\begin{proof} The proof is based on the construction of suitable approximate solutions to \eqref{nls} with initial condition asymptotically matching $\phi_n$.  Our approximation is based on solutions to the \emph{linear} Schr\"odinger equation. 

To define our approximation, we let $\theta\in(0,1)$ be a small parameter to be determined below and introduce a frequency cutoff $P_n$ and spatial cutoff $\chi_n$ as follows.  First, we let
\begin{equation}\label{def-Pn}
P_n = P_{|\frac{x_n}{\lambda_n}|^{-\theta}\leq\cdot\leq |\frac{x_n}{\lambda_n}|^{\theta}}. 
\end{equation}
Next, we take $\chi_n$ to be a smooth function satisfying
\[
\chi_n(x) = \begin{cases} 1 & |x+\tfrac{x_n}{\lambda_n}| \geq \tfrac12 |\tfrac{x_n}{\lambda_n}| \\ 0 & |x+\tfrac{x_n}{\lambda_n}| < \tfrac14 |\tfrac{x_n}{\lambda_n}|,\end{cases}
\]
with $|\partial^\alpha \chi_n|\lesssim |\tfrac{x_n}{\lambda_n}|^{-|\alpha|}$ for all multiindices $\alpha$.  Observe that $\chi_n\to 1$ pointwise as $n\to\infty$.   

We now define approximations $\tilde v_{n,T}$ as follows: 

\begin{definition}[Approximate solutions] First, let
\[
I_{n,T} := [a_{n,T}^-,a_{n,T}^+]:= [-\lambda_n^2 t_n - \lambda_n^2 T,-\lambda_n^2t_n+\lambda_n^2T]
\]
and for $t\in I_{n,T}$ define
\begin{align*}
\tilde v_{n,T}(t) & = g_n[\chi_n P_n e^{i(\lambda_n^{-2}t+t_n)\Delta}\phi] \\
& = \chi_n(\tfrac{\cdot-x_n}{\lambda_n})e^{i(t+\lambda_n^2 t_n)\Delta} g_n[P_n\phi].
\end{align*}

Next, let
\[
I_{n,T}^+ := (a_{n,T}^+,\infty),\quad I_{n,T}^- = (-\infty,a_{n,T}^-)
\]
and set
\[
\tilde v_{n,T}(t) =\begin{cases} e^{i(t-a_{n,T}^+)\Delta} [\tilde v_{n,T}(a_{n,T}^+)] & t\in I_{n,T}^+, \\ e^{i(t-a_{n,T}^-)\Delta}[\tilde v_{n,T}(a_{n,T}^-)]& t\in I_{n,T}^-.\end{cases}
\]
\end{definition}

We will prove the existence of the solutions $v_n$ by applying the stability result in Proposition~\ref{P:stab}.  This requires that we verify the conditions appearing in the statement of that result.

\textbf{Condition 1.}
\begin{equation}\label{vnt-bounds}
\limsup_{T\to\infty}\limsup_{n\to\infty}\bigl\{\|\tilde v_{n,T}\|_{L_t^\infty \dot H_x^1(\R\times\R^3)} + \|\tilde v_{n,T}\|_{\dot S^1(\R)} \bigr\}\lesssim 1.
\end{equation}

\begin{proof} We estimate separately on $I_{n,T}$ and $I_{n,T}^{\pm}$.

We first estimate on $I_{n,T}$.  Noting that 
\[
\|\chi_n\|_{L^\infty} + \|\nabla[\chi_n]\|_{L^3} \lesssim 1
\]
uniformly in $n$, we may use the product rule, H\"older's inequality, and the Sobolev embedding $\dot H^1\hookrightarrow L^6$ to deduce that $\chi_n:\dot H^1\to \dot H^1$ boundedly (as a multiplication operator). Then the $L_t^\infty \dot H_x^1$ bound on $I_{n,T}$ follows from Strichartz.  The $\dot S^1$ bound on $I_{n,T}$ also follows from Sobolev embedding and Strichartz.  With the $L_t^\infty \dot H_x^1$ bound in place on $I_{n,T}$ (in particular, at the endpoints of this interval), the desired bounds on $I_{n,T}^{\pm}$ then readily follow from Sobolev embedding and Strichartz. 
\end{proof}

\textbf{Condition 2.}
\[
\lim_{T\to\infty}\limsup_{n\to\infty}\|\tilde v_{n,T}(0)-\phi_n\|_{\dot H^1} = 0.
\]

\begin{proof} It suffices to treat the following two cases:
\begin{itemize}
\item[(i)] $t_n\equiv 0$ (so that $0\in I_{n,T}$),
\item[(ii)] $t_n\to+\infty$ (so that $0\in I_{n,T}^+$ for all $n$ sufficiently large).  
\end{itemize}
In particular, the case $t_n\to-\infty$ may be handled similarly to case (ii). 

Case (i). If $t_n\equiv 0$, then 
\[
\| \tilde v_{n,T}(0)-\phi_n\|_{\dot H^1} = \|(\chi_n P_n-1)\phi\|_{\dot H^1} \to 0
\]
as $n\to\infty$, by the dominated convergence theorem. 

(ii) If $t_n\to\infty$, then by \eqref{gc} and by construction, we may write
\begin{align*}
\tilde v_{n,T}(0) & = e^{-ia_{n,T}^+\Delta}g_n[\chi_n P_n e^{i(\lambda_n^{-2}a_{n,T}^++t_n)\Delta}\phi] \\ 
&  = g_n e^{it_n\Delta} [e^{-iT\Delta}\chi_n P_n e^{iT\Delta}\phi].
\end{align*}
Recalling $\phi_n = g_n e^{it_n\Delta}\phi$, we obtain 
\begin{align*}
\| \tilde v_{n,T}(0) - \phi_n \|_{\dot H^1} & = \|[e^{-iT\Delta}\chi_n P_n e^{iT\Delta}-1]\phi\|_{\dot H^1}\\
& = \| [\chi_n P_n -1]e^{iT\Delta}\phi \|_{\dot H^1} \to 0
\end{align*}
as $n\to\infty$, again by the dominated convergence theorem.\end{proof}

\textbf{Condition 3.} Defining
\[
e_{n,T}=(i\partial_t + \Delta)\tilde v_{n,T} + |x|^{-1}|\tilde v_{n,T}|^2 \tilde v_{n,T},
\]
we claim that
\[
\lim_{T\to\infty}\limsup_{n\to\infty}\|\nabla e_{n,T} \|_{N(\R)} = 0,
\]
where $N(\cdot)$ is as in Remark~\ref{NI}. 

\begin{proof} We estimate separately on $I_{n,T}$ and $I_{n,T}^\pm$. 

We first treat the interval $I_{n,T}$.  We write $e_{n,T}=e_{n,T}^{\text{lin}}+e_{n,T}^{\text{nl}}$, with
\begin{align*}
e_{n,T}^{\text{lin}} & = \Delta[\chi_n(\tfrac{x-x_n}{\lambda_n})] e^{i(t+\lambda_n^2 t_n)\Delta}g_n[P_n\phi] \\
& \quad + 2\nabla[\chi_n(\tfrac{x-x_n}{\lambda_n})]\cdot e^{i(t+\lambda_n^2 t_n)\Delta}\nabla[g_n P_n\phi]
\end{align*} 
and 
\[
e_{n,T}^{\text{nl}} = \lambda_n^{-1} g_n\bigl\{ |\lambda_n x+ x_n|^{-1} \chi_n^3 |\Phi_n|^2\Phi_n\bigr\},
\]
where
\[
\Phi_n(t,x) = P_n e^{i(\lambda_n^{-2} t+t_n)\Delta} \phi.
\]

Applying the gradient to $e_{n,T}^{\text{lin}}$, we obtain a sum of terms of the form
\[
\partial^j[\chi_n(\tfrac{x-x_n}{\lambda_n})] \cdot e^{i(t+\lambda_n^2 t_n)\Delta} \partial^{3-j}[g_n P_n \phi]\qtq{for}j\in\{1,2,3\},
\]
where $\partial$ denotes a derivative in $x$.  We estimate such a term in $L_t^1 L_x^{2}(I_{n,T}\times\R^3)$ as follows: by H\"older's inequality, Bernstein's inequality, and the fact that $j\geq 1$, we have
\begin{align*}
\| \partial^j&[\chi_n(\tfrac{x-x_n}{\lambda_n})] \cdot e^{i(t+\lambda_n^2 t_n)\Delta}\partial^{3-j}[g_n P_n\phi]\|_{L_t^1 L_x^{2}(I_{n,T}\times\R^3)} \\
& \lesssim |I_{n,T}|\, \|\partial^j[\chi_n(\tfrac{x-x_n}{\lambda_n})]\|_{L_x^\infty} \|\partial^{3-j}[g_nP_n\phi]\|_{L_t^\infty L_x^2} \\
& \lesssim \lambda_n^2T\cdot\lambda_n^{-j}|\tfrac{x_n}{\lambda_n}|^{-j}\cdot\lambda_n^{j-2}\|\partial^{3-j}P_n\phi\|_{L_t^\infty L_x^2} \\
& \lesssim T|\tfrac{x_n}{\lambda_n}|^{-j}|\tfrac{x_n}{\lambda_n}|^{|2-j|\theta}\to 0\qtq{as}n\to\infty  
\end{align*}
for $\theta$ sufficiently small. 

To estimate $e^{\text{nl}}_{n,T}$ on $I_{n,T}$, we begin by observing that by a change of variables and H\"older's inequality, we have
\[
\|\nabla e_{n,T}^{\text{nl}}\|_{L_t^2 L_x^{\frac65}(I_{n,T}\times\R^3)}  \leq \lambda_nT^{\frac12}\| \nabla\bigl[ |\lambda_n x+ x_n|^{-1} \chi_n^3|\Phi_n|^2\Phi_n\bigr]\|_{L_t^\infty L_x^{\frac65}(I_{n,T}\times\R^3)}.
\]
We next note that 
\[
\|\partial^j\bigl[\lambda_n x+x_n|^{-1}\chi_n^3\bigr]\|_{L_x^\infty} \lesssim |\tfrac{x_n}{\lambda_n}|^{-j}|x_n|^{-1},\quad j\in\{0,1\}. 
\]
Thus, using the product rule, H\"older's inequality, Sobolev embedding, and Bernstein (cf. \eqref{def-Pn}), we estimate
\begin{align*}
\| \nabla e_{n,T}^{\text{nl}}\|_{L_t^2 L_x^{\frac65}(I_{n,T}\times\R^3)} &\lesssim T^{\frac12}|\tfrac{x_n}{\lambda_n}|^{-1}\|\Phi_n\|_{L_t^\infty L_x^3}^2  \sum_{j=0}^1 |\tfrac{x_n}{\lambda_n}|^{-j} \|\partial^{1-j}\Phi_n\|_{L_t^\infty L_x^2} \\
& \lesssim T^{\frac12}|\tfrac{x_n}{\lambda_n}|^{-1+\theta}\|\phi\|_{\dot H^1}^3 \to 0 \qtq{as}n\to\infty
\end{align*}
for $\theta$ sufficiently small. 

We turn to the intervals $I_{n,T}^\pm$.  We only consider $I_{n,T}^+$, as a similar argument handles the remaining interval.  In this case, we have
\[
e_{n,T} = |x|^{-1} |V_{n,T}|^2 V_{n,T},
\]
where 
\[
V_{n,T}= e^{i(t-a_{n,T}^+)\Delta}[\tilde v_{n,T}(a_{n,T}^+)]. 
\]
We then estimate as in Lemma~\ref{L:NLE} and use Strichartz, \eqref{vnt-bounds}, and a change of variables to obtain
\begin{align*}
\| \nabla e_{n,T}\|_{L_t^2 L_x^{\frac65}(I_{n,T}^+\times\R^3)} & \lesssim \| V_{n,T}\|_{L_{t,x}^{10}(I_{n,T}^+\times\R^3)}\|\nabla V_{n,T}\|_{L_t^5 L_x^{\frac{30}{11}}(I_{n,T}^+\times\R^3)}^2 \\
& \lesssim \|\phi\|_{\dot H^1}^2\|e^{it\Delta}[\tilde v_{n,T}(a_{n,T}^+)]\|_{L_{t,x}^{10}(\R_+\times\R^3)}. 
\end{align*}
We now use the definition of $\tilde v_{n,T}$ and \eqref{gc} to write
\[
e^{it\Delta}\tilde v_{n,T}(a_{n,T}^+) = e^{it\Delta} g_n [\chi_n P_n e^{iT\Delta}\phi] = g_n e^{i\lambda_n^{-2}t\Delta}[\chi_n P_n e^{iT\Delta}\phi]. 
\]
Thus by a change of variables, Sobolev embedding, and Strichartz, we have
\begin{align}
\|e^{it\Delta}&[\tilde v_{n,T}(a_{n,T}^+)]\|_{L_{t,x}^{10}(\R_+\times\R^3)}\nonumber \\
& = \|e^{it\Delta}\chi_n P_n e^{iT\Delta}\phi \|_{L_{t,x}^{10}(\R_+\times\R^3)}\nonumber \\
& \lesssim \|\nabla[\chi_n P_n -1]e^{iT\Delta}\phi\|_{L_x^2} + \| e^{it\Delta}\phi\|_{L_{t,x}^{10}([T,\infty)\times\R^3)}.\label{l-e-t}
\end{align}
We now observe that the first term in \eqref{l-e-t} tends to zero as $n\to\infty$ by the dominated convergence theorem, while the second term in \eqref{l-e-t} tends to zero as $T\to\infty$ by Strichartz and the monotone convergence theorem. 
\end{proof}

With conditions 1--3 in place, we may apply Proposition~\ref{P:stab} (see also Remark~\ref{remark:stab}) to deduce the existence of solutions $v_n$ to \eqref{nls} with initial data $\phi_n$; furthermore, these solutions have finite $\dot S^1(\R)$ norm (with a bound depending only on $\|\phi\|_{\dot H^1}$), and we have that
\begin{equation}\label{embed-cc1}
\limsup_{T\to\infty}\lim_{n\to\infty} \|v_n-\tilde v_{n,T}\|_{\dot S^1(\R)} = 0.
\end{equation}

With \eqref{embed-cc1} in place, we turn to the approximation by $C_c^\infty$ functions.  We adapt the arguments from \cite{KMVZZ} and only sketch the proof. We let $\eps>0$.  By \eqref{embed-cc1}, it suffices to show that there exists $\psi\in C_c^\infty(\R\times\R^3)$ so that
\[
\|\lambda_n^{\frac12} \tilde v_{n,T}(\lambda_n^2(t-t_n),\lambda_n x+x_n)-\psi(t,x)\|_{\dot S^1(\R)} \lesssim \eps
\]
for all $n,T$ sufficiently large.  In fact, we will see that $\psi$ will be taken to be an approximation to $e^{it\Delta}\phi$. 

We first observe that for $t\in(-T,T)$, we have
\[
\lambda_n^{\frac12} \tilde v_{n,T}(\lambda_n^2(t-t_n),\lambda_n x+x_n) = \chi_n(x) e^{it\Delta}P_n\phi(x).
\]
We can approximate this in $\dot S^1(\R)$ by $e^{it\Delta}\phi$ for large $n$, which can in turn be well-approximated by a compactly supported function of space-time on $[-T,T]\times\R^3$.  For $t>T$ (the case $t<T$ being treated similarly), we have (using \eqref{gc})
\begin{align*}
\lambda_n^{\frac12}\tilde v_{n,T}(\lambda_n^2(t-t_n),\lambda_n x+x_n) & = g_n^{-1} e^{i\lambda_n^2(t-T)\Delta}g_n \chi_n e^{iT\Delta}P_n \phi \\
& = e^{it\Delta}[e^{-iT\Delta}\chi_n e^{iT\Delta}]P_n \phi.
\end{align*}
Again, we can approximate this in $\dot S^1(\R)$ by $e^{it\Delta}\phi$ for large $n$, which is again well-approximated by a compactly supported function of space-time on $[T,\infty)\times\R^3$. 
\end{proof}

Finally, we prove the following decoupling lemma that was needed in the proof of Theorem~\ref{T:reduction} above. 
 
\begin{lemma}[Decoupling of nonlinear profiles]\label{L:decouple} Let
\[
u_n^J = \sum_{j=1}^J v_n^j + e^{it\Delta} w_n^J
\]
be the nonlinear decomposition appearing in \eqref{NPD}, with all of the properties established in the proof of Theorem~\ref{T:reduction}.  Then the following conditions hold: 
\begin{align}
& \limsup_{J\to J^*}\limsup_{n\to\infty}\bigl\{ \|u_n^J\|_{L_t^\infty \dot H_x^1} + \|u_n^J\|_{L_{t,x}^{10}} + \|\nabla u_n^J\|_{L_t^5 L_x^{\frac{30}{11}}}\bigr\}\lesssim 1, \label{unJbds} \\
& \limsup_{J\to J^*}\limsup_{n\to\infty} \|\nabla[(i\partial_t + \Delta)u_n^J + |x|^{-1}|u_n^J|^2 u_n^J] \|_{N(
\R)} = 0. \label{unJapprox}
\end{align}
\end{lemma}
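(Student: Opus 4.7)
The proof splits into the two claims. For the bounds \eqref{unJbds}, I would use a Pythagorean-style decoupling argument; for the error estimate \eqref{unJapprox}, I would exploit the orthogonality \eqref{orthogonality} together with the $L_{t,x}^{10}$-vanishing of the linear remainder \eqref{vanishing}.

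For \eqref{unJbds}, the $L_t^\infty \dot H_x^1$ bound follows from the energy decoupling \eqref{energy-decoupling} together with the uniform $L_t^\infty \dot H_x^1$ control on each $v_n^j$ (which holds with constant depending only on $\|\phi^j\|_{\dot H^1}$, by either Proposition~\ref{P:LWP} or Proposition~\ref{P:embed}). For the $\dot S^1(\mathbb{R})$-bound, I would first observe that $\sum_j \|\phi^j\|_{\dot H^1}^2$ is finite (by energy decoupling and the a priori $\dot H^1$-bound on $u_n$), so all but finitely many profiles are in the small-data regime of Proposition~\ref{P:LWP}(ii), for which $\|v_n^j\|_{\dot S^1}\lesssim\|\phi^j\|_{\dot H^1}$. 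The finitely many remaining profiles obey uniform $\dot S^1$ bounds individually (again by Proposition~\ref{P:LWP} or~\ref{P:embed}). Orthogonality \eqref{orthogonality} then forces the cross terms in $\|\sum_j v_n^j\|_{L_{t,x}^{10}}^{10}$ and $\|\nabla\sum_j v_n^j\|_{L_t^5 L_x^{30/11}}^5$ to vanish in the $n\to\infty$ limit, so the total norm is controlled by $\sum_j \|v_n^j\|_{\dot S^1}^{10/5}$-type quantities, which converge. The contribution of $e^{it\Delta}w_n^J$ is controlled directly by Strichartz applied to $w_n^J$ in $\dot H^1$.

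For \eqref{unJapprox}, writing $F(u)=|u|^2 u$ and using that each $v_n^j$ solves \eqref{nls}, the residual equals
\[
|x|^{-1}\bigl\{F\bigl(\textstyle\sum_j v_n^j + e^{it\Delta}w_n^J\bigr) - \sum_j F(v_n^j)\bigr\}.
\]
I would split this into (a) the mixed terms containing at least one factor of $e^{it\Delta}w_n^J$ and (b) the pure cross terms $|x|^{-1}v_n^{j_1}\overline{v_n^{j_2}}v_n^{j_3}$ with $(j_1,j_2,j_3)$ not constant. Piece (a) is estimated, after applying $\nabla$, in $L_t^2 L_x^{6/5}$ by a variant of Lemma~\ref{L:NLE} (with Hardy to absorb $|x|^{-1}$), using the $\dot S^1$-bounds from \eqref{unJbds} on $u_n^J$ and $\sum_j v_n^j$ together with the $L_{t,x}^{10}$-smallness \eqref{vanishing} of $e^{it\Delta}w_n^J$. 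Piece (b) is handled by first approximating each $v_n^j$, in its self-similar coordinates, by a $C_c^\infty$ function: Proposition~\ref{P:embed} supplies such an approximant when $|x_n^j|/\lambda_n^j\to\infty$, and standard truncation of the scattering solution $v^j$ does so otherwise. One then invokes \eqref{orthogonality} to conclude that the resulting triple products vanish asymptotically in the required norm.

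The principal obstacle is the cross-term analysis in piece (b) in the presence of the weight $|x|^{-1}$. In the homogeneous setting this step is routine: a change of variables to a common coordinate system followed by H\"older forces the supports of two of the three approximants to shrink to measure zero. Here one must additionally check that $|x|^{-1}$, after the change of variables, does not destroy the gain. When one of the profiles has $|x_n^j|/\lambda_n^j\to\infty$ the (rescaled) supports sit far from the origin, so $|x|^{-1}$ is uniformly bounded on them; when all relevant profiles have $x_n^j\equiv 0$ one passes the weight onto a single factor via Hardy and closes by the orthogonality of scales (or of times, which survives testing against $L_t^2 L_x^{6/5}$ after applying $\nabla$). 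Once the triple products are dispatched, one truncates the sum at $j\leq J_0$ using the smallness of $\sum_{j>J_0}\|\phi^j\|_{\dot H^1}^2$ to bound the tail $\sum_{j>J_0}$ of triple products in the small-data regime, yielding \eqref{unJapprox}.
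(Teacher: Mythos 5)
Your handling of the terms containing the linear remainder has a genuine gap. After applying $\nabla$ to $|x|^{-1}\bigl[f(u_n^J)-f(u_n^J-e^{it\Delta}w_n^J)\bigr]$, you must face the terms in which the derivative lands on the remainder itself, i.e.\ terms of the schematic form $|x|^{-1}\,u_n^J\cdot u_n^J\cdot \nabla e^{it\Delta}w_n^J$. Your proposal estimates all such mixed terms ``by a variant of Lemma~\ref{L:NLE} \dots using the $\dot S^1$-bounds together with the $L_{t,x}^{10}$-smallness \eqref{vanishing} of $e^{it\Delta}w_n^J$,'' but for this particular term the factor that is small in \eqref{vanishing} does not appear: a H\"older/Hardy estimate in $L_t^2L_x^{6/5}$ produces $\|u_n^J\|_{L_{t,x}^{10}}\,\||x|^{-1}u_n^J\|_{L_t^5L_x^{30/11}}\,\|\nabla e^{it\Delta}w_n^J\|_{L_t^5L_x^{30/11}}$, and every factor here is merely bounded (the last one by Strichartz from the $\dot H^1$-boundedness of $w_n^J$), not small. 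Since \eqref{vanishing} gives no decay for $\nabla e^{it\Delta}w_n^J$ in any Strichartz norm, the argument as written does not close; this is precisely the point flagged in the proof as the ``additional challenge'' of the type (b) terms.

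The missing ingredient is a local smoothing argument. One first uses the nonlinear decoupling \eqref{np-decouple}, the boundedness of the full sum in $L_t^5\dot H_x^{1,\frac{30}{11}}$ (to discard the tail $j>J_0$), and $\|u_n^J\|_{L_t^\infty L_x^6}\lesssim1$ to reduce to finitely many single terms $\||x|^{-1}v_n^j\,\nabla e^{it\Delta}w_n^J\|_{L_t^2L_x^{3/2}}$; then one approximates $v_n^j$ (in its own coordinates) by a compactly supported function of space-time — which is exactly what Proposition~\ref{P:embed} and the standard profiles provide — and reduces matters to showing $\limsup_{J}\limsup_{n}\|\nabla e^{it\Delta}w_n^J\|_{L_{t,x}^2(K)}=0$ for compact $K$, as in \eqref{lclsmth2}. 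This last statement follows by interpolating a local smoothing estimate (which gains a fraction of a derivative on compact sets) against the derivative-free vanishing \eqref{vanishing}; see \cite[Lemma~2.12]{Visan}. Your treatment of the pure cross terms $|x|^{-1}v_n^{j_1}\overline{v_n^{j_2}}v_n^{j_3}$ via $C_c^\infty$ approximation and \eqref{orthogonality}, including the observation that the weight is harmless when $|x_n^j|/\lambda_n^j\to\infty$, is in line with the proof of \eqref{np-decouple}, and your argument for \eqref{unJbds} is the standard one; but without the local smoothing step the proof of \eqref{unJapprox} is incomplete.
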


\begin{proof}  The key ingredient is the following decoupling statement for the nonlinear profiles: 
\begin{equation}\label{np-decouple}
\lim_{n\to\infty} \||x|^{-1} v_n^j v_n^k \|_{L_t^{\frac{10}{3}}L_x^{\frac{15}{7}}(\R\times\R^3)}= 0\qtq{for}j\neq k,
\end{equation}
which follows from approximation by functions in $C_c^\infty(\R\times\R^d)$ and the use of the orthogonality conditions \eqref{orthogonality} (see e.g. \cite{Keraani} or \cite[Lemma~7.3]{Visan}).  With this bound in hand, one can utilize estimates similar to those appearing in Lemma~\ref{L:NLE} to obtain \eqref{unJbds} (see e.g. \cite[(7.11)]{Visan}).  We therefore focus on the proof of \eqref{unJapprox}.  To this end, we write $f(z)=|z|^2 z$ and observe
\begin{align}
(i\partial_t + \Delta)u_n^J + |x|^{-1} f(u_n^J) & = |x|^{-1}[f(u_n^J) - f(u_n^J - e^{it\Delta} w_n^J)]\label{enJ1} \\
& + |x|^{-1}\biggl[f\bigl(\sum_{j=1}^J v_n^j\bigr) - \sum_{j=1}^J f(v_n^j)\biggr].\label{enJ2}
\end{align}
Applying a derivative, we find that it suffices to estimate terms of the following type:
\begin{itemize}
\item[(a)] $|x|^{-1} u_n^J\cdot T u_n^J\cdot e^{it\Delta} w_n^J$,\quad $T\in\{|x|^{-1},\nabla\}$,
\item[(b)] $|x|^{-1}u_n^J\cdot u_n^J\cdot \nabla e^{it\Delta}w_n^J,$
\item[(c)] $ |x|^{-1} v_n^j \cdot v_n^k \cdot T v_n^\ell$,\quad $T\in\{|x|^{-1},\nabla\}$,
\end{itemize}
where in item (c) we have $j\neq k$ and $\ell\in\{1,\dots,J\}$.  The terms of type (c) also involve a constant $C_J$ that grows with $J$.  However, we will shortly see that for each fixed $J$, these terms tend to zero in $N(\R)$ as $n\to\infty$, so that this constant is ultimately harmless.  

Terms of the form (a) are straightforward to estimate.  We estimate in $L_t^2 L_x^{\frac65}$, using the spaces appearing in Lemma~\ref{L:NLE} and the vanishing condition \eqref{vanishing}. Terms of the form (c) are instead handled by appealing to the nonlinear decoupling \eqref{np-decouple}. 

Terms of type (b) pose an additional challenge because the vanishing in \eqref{vanishing} is only given in $L_{t,x}^{10}$ (i.e. without derivative).  We adapt the ideas as presented in \cite[(7.16)]{Visan}. We first observe that by the nonlinear decoupling \eqref{np-decouple} and the vanishing condition \eqref{vanishing}, we may reduce to considering terms of the form
\[
\|  u_n^J\|_{L_t^\infty L_x^6}\|  \sum_{j=1}^J |x|^{-1}v_n^j\cdot  \nabla e^{it\Delta}w_n^J\|_{L_t^2 L_x^{\frac32}}\lesssim \|  \sum_{j=1}^J |x|^{-1}v_n^j\cdot  \nabla e^{it\Delta}w_n^J\|_{L_t^2 L_x^{\frac32}}.
\] 
Using the fact that the entire sum is controlled in $L_t^5 L_x^{\frac{30}{11}}$ (so that the tail of the sum is small), we further reduce to estimating a fixed finite sum, and thereby to estimating a single term of the form
\begin{equation}\label{lclsmth1}
\||x|^{-1} v_n^j \nabla e^{it\Delta} w_n^J\|_{L_t^2 L_x^{\frac32}}.
\end{equation} 
Using approximation of $v^j$ by a compactly supported function of space-time (in the $L_t^5 \dot H^{1,\frac{30}{11}}$-norm) and applying H\"older's inequality, the problem further reduces to showing that
\begin{equation}\label{lclsmth2}
\limsup_{J\to J^*}\limsup_{n\to\infty} \|\nabla e^{it\Delta} w_n^J\|_{L_{t,x}^2(K)}  = 0 \qtq{for any compact}K\subset\R\times\R^3. 
\end{equation}
This finally follows from an interpolation argument using a local smoothing estimate and the vanishing \eqref{vanishing}.  We refer the reader once again to \cite{Visan} (particularly Lemma~2.12 therein) for the details. \end{proof}
%%%%%%
\section{Preclusion of compact solutions}\label{S:virial}

Throughout this section, we suppose $u:[0,T_{\max})\times\R^3\to\C$ is the minimal non-scattering solution given by Theorem~\ref{T:reduction}.  In particular, $u$ is parametrized by a frequency scale function $N(t)$ that obeys $\inf_{t\in[0,T_{\max})}N(t)\geq 1$. 

%%%%
\subsection{Finite-time blowup}\label{S:FTBU} To preclude the finite-time blowup scenario, we utilize the following reduced Duhamel formula, which is a consequence of the compactness properties of $u$.  For the proof, see \cite[Proposition~5.23]{KVClay}.
 
\begin{proposition}[Reduced Duhamel formula]\label{P:RD} For $t\in[0,T_{\max})$, the following holds as a weak limit in $\dot H^1$:
\[
u(t) = i\lim_{T\to T_{\max}} \int_t^T e^{i(t-s)\Delta}[|x|^{-1}|u|^2 u(s)]\,ds.
\]
\end{proposition}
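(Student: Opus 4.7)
The strategy is to start from the usual Duhamel identity on $[t,T]\subset[0,T_{\max})$,
\[
u(t) = e^{i(t-T)\Delta}u(T) + i\int_t^T e^{i(t-s)\Delta}\bigl[|x|^{-1}|u|^2 u(s)\bigr]\,ds,
\]
and to show that the linear term $e^{i(t-T)\Delta}u(T)$ converges weakly to zero in $\dot H^1$ as $T\to T_{\max}$. Since Lemma~\ref{energy-trapping} provides the uniform bound $\sup_T\|u(T)\|_{\dot H^1}<\infty$, a density argument reduces this to checking that, for every $\psi\in C_c^\infty(\R^3)$,
\[
\bigl\langle \nabla u(T),\ e^{-i(t-T)\Delta}\nabla\psi\bigr\rangle_{L^2}\longrightarrow 0 \qquad \text{as } T\to T_{\max}.
\]
The central tool is the $\dot H^1$-compactness of $\{N(t)^{-1/2}u(t,N(t)^{-1}\cdot)\}$, which supplies, for every $\eps>0$, an $R=R(\eps)$ such that uniformly in $t$ the kinetic energy of $u(t)$ is concentrated at spatial scale $\lesssim 1/N(t)$ and frequency scale $\sim N(t)$ modulo an $\eps$-error.

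In the finite-time-blowup case $T_{\max}<\infty$, I would first establish $N(T)\to\infty$ as $T\to T_{\max}$: were $N(T_n)$ bounded along some $T_n\nearrow T_{\max}$, the precompactness of the rescaled orbit would yield a strong $\dot H^1$-limit of a bounded rescaling of $u(T_n)$, and applying Proposition~\ref{P:LWP} at that limit with initial time $T_n$ would produce a uniform existence interval beyond $T_{\max}$, contradicting maximality. With $N(T)\to\infty$ in hand, I would fix a threshold $A>0$ and split the pairing via a Littlewood--Paley decomposition at frequency $\sim A$: the low-frequency piece is bounded by $\|P_{\leq A}u(T)\|_{\dot H^1}\|\psi\|_{\dot H^1}$, which vanishes as $T\to T_{\max}$ once $A<N(T)/R$, while the complementary high-frequency tail of the fixed function $\nabla\psi$ is made small by choosing $A$ large at the outset.

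In the global case $T_{\max}=\infty$, I would instead use the assumption $N(T)\geq 1$, which turns the compactness into the uniform spatial-tail estimate $\int_{|x|>R}|\nabla u(T,x)|^2\,dx\leq\eps$. Splitting the pairing over $|x|\leq R$ and $|x|>R$, the dispersive estimate $\|e^{-i(t-T)\Delta}\nabla\psi\|_{L^\infty}\lesssim |T-t|^{-3/2}\|\nabla\psi\|_{L^1}$ controls the first piece by $R^{3/2}|T-t|^{-3/2}\|\nabla\psi\|_{L^1}\|\nabla u(T)\|_{L^2}\to 0$ as $T\to\infty$, while the second piece is bounded by $\eps\|\nabla\psi\|_{L^2}$. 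Sending first $T\to\infty$ and then $\eps\to 0$ yields the vanishing.

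The main obstacle is the $N(T)\to\infty$ claim in the finite-time-blowup scenario, which relies on the \emph{a priori} argument that precompactness combined with local well-posedness at the appropriate rescaling prevents a bounded frequency scale at the blowup time; this is the only step where maximality of $T_{\max}$ enters essentially. Once weak convergence of the linear term is secured, passing to the weak $\dot H^1$-limit in the Duhamel identity above gives the reduced formula as stated.
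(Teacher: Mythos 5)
Your proof is correct and follows essentially the same route as the proof the paper relies on: the paper does not argue this proposition itself but cites \cite{KVClay} (Proposition~5.23), whose proof is exactly your reduction of the Duhamel formula to the weak vanishing of $e^{i(t-T)\Delta}u(T)$ in $\dot H^1$, treated via frequency localization together with $N(T)\to\infty$ when $T_{\max}<\infty$, and via spatial tightness (using $N(t)\geq 1$) plus the dispersive estimate when $T_{\max}=\infty$. One small refinement: in your $N(T)\to\infty$ step, the uniform existence interval for data converging strongly in $\dot H^1$ should be obtained from the stability result (Proposition~\ref{P:stab}), or from perturbing the smallness criterion in Proposition~\ref{P:LWP} via Strichartz, since in the critical setting the local existence time depends on the profile of the datum and not merely on its $\dot H^1$ norm.
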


Combining this result with conservation of mass, we can rule out the finite-time blowup scenario.

\begin{proposition}[No finite-time blowup] If $T_{\max}<\infty$, then $u\equiv 0$.
\end{proposition}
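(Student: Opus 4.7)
The plan is to combine the finite-time blowup assumption with the compactness hypothesis of Theorem~\ref{T:reduction} to show that $u(t)\in L^2$ with $\|u(t)\|_{L^2}\to 0$ as $t\to T_{\max}$, and then invoke conservation of mass to force $u\equiv 0$. The argument proceeds in three steps: (i) the frequency scale satisfies $N(t)\to\infty$ as $t\to T_{\max}$; (ii) the reduced Duhamel formula of Proposition~\ref{P:RD}, combined with a dual Strichartz estimate, yields $\|u(t)\|_{L^2}\lesssim (T_{\max}-t)^{1/2}$; (iii) mass conservation then closes the argument.

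For step (i), I argue by contradiction. If $N(t)\not\to\infty$, then since $\inf N\geq 1$ there is a sequence $t_n\to T_{\max}$ and $N_\infty\in[1,\infty)$ with $N(t_n)\to N_\infty$. The compactness hypothesis gives, along a subsequence, $N(t_n)^{-1/2}u(t_n, N(t_n)^{-1}\cdot)\to\psi$ in $\dot H^1$; undoing the (now convergent) rescaling and using the continuity of the dilation action on $\dot H^1$ yields $u(t_n)\to u_\infty:= N_\infty^{1/2}\psi(N_\infty\,\cdot)$ strongly in $\dot H^1$. Applying the existence part of Proposition~\ref{P:LWP} to produce a solution $\tilde u$ with $\tilde u(T_{\max})=u_\infty$ on some neighborhood of $T_{\max}$, and then invoking Proposition~\ref{P:stab} at time $t_n$ for large $n$, extends $u$ past $T_{\max}$, contradicting maximality.

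For step (ii), the key nonlinear estimate is
\[
\||x|^{-1}|u|^2 u\|_{L^{6/5}_x}\leq \||x|^{-1}u\|_{L^2_x}\|u\|_{L^6_x}^2\lesssim \|\nabla u\|_{L^2_x}^3\lesssim 1,
\]
where we used H\"older's inequality, Hardy's inequality (valid at $r=2<3$), Sobolev embedding, and the uniform $\dot H^1$-bound from Lemma~\ref{energy-trapping}. Defining the truncated Duhamel integral $u^T(t):=i\int_t^T e^{i(t-s)\Delta}[|x|^{-1}|u|^2 u](s)\,ds$, the inhomogeneous dual Strichartz estimate with admissible pair $(q,r)=(2,6)$ on the interval $[t,T]$ gives
\[
\|u^T(t)\|_{L^2_x}\lesssim \||x|^{-1}|u|^2 u\|_{L^2_s L^{6/5}_x([t,T])}\lesssim (T-t)^{1/2}.
\]
Since $u^T(t)\to u(t)$ weakly in $\dot H^1$ (hence in $\mathcal{D}'$) by Proposition~\ref{P:RD}, the uniform $L^2$-bound lets me extract a subsequence converging weakly in $L^2$; uniqueness of distributional limits then identifies the $L^2$-weak limit with $u(t)$, so $u(t)\in L^2$ with $\|u(t)\|_{L^2}\lesssim (T_{\max}-t)^{1/2}\to 0$. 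With $u\in L^\infty_{\mathrm{loc}} H^1$ now available, the standard identity
\[
\tfrac{d}{dt}\|u(t)\|_{L^2}^2 = 2\Im\!\int \bar u\bigl(\Delta u+|x|^{-1}|u|^2 u\bigr)\,dx = 0
\]
gives mass conservation, so $\|u(t)\|_{L^2}\equiv\|u_0\|_{L^2}$. Combined with the vanishing in step (ii), this forces $\|u_0\|_{L^2}=0$ and hence $u\equiv 0$.

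The main obstacle is step (ii): the reduced Duhamel formula is stated only as a weak $\dot H^1$-limit, so upgrading it to an $L^2$-bound on $u(t)$ requires passing to the truncated integrals $u^T$, establishing the $L^2$-bound uniformly in $T$, and then recovering the bound on $u(t)$ via weak compactness and uniqueness of distributional limits. A related subtlety is that mass conservation is not automatic at the $\dot H^1$-regularity level of our solution; it becomes available only a posteriori once the $L^2$-bound of step (ii) has been established.
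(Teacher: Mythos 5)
Your proposal is correct and follows essentially the same route as the paper: the reduced Duhamel formula plus the uniform Hardy/Sobolev bound on $|x|^{-1}|u|^2u$ (using the sub-threshold kinetic energy bound) yields $\|u(t)\|_{L^2}\lesssim (T_{\max}-t)^{1/2}$, and conservation of mass then forces $u\equiv 0$; the only cosmetic difference is that you obtain the $L^2$ bound via the endpoint dual Strichartz pair $(2,6)$, whereas the paper uses a frequency-localized $L_t^1L_x^2$ estimate with Bernstein and then sums low and high frequencies, which optimizes to the same bound. Note also that your step (i), the claim that $N(t)\to\infty$ as $t\to T_{\max}$, is never used in steps (ii)--(iii) and can simply be dropped.
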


\begin{proof}  We suppose that $T_{\max}<\infty$.  Then, using Proposition~\ref{P:RD}, Strichartz estimates, H\"older's inequality, Bernstein's inequality, and Hardy's inequality, we find that for any $t\in[0,T_{\max})$ and any $N>0$, 
\begin{align*}
\|P_N u(t)\|_{L_x^2} & \lesssim \|P_N(|x|^{-1}|u|^2 u)\|_{L_t^1 L_x^2([t,T_{\max})\times\R^3)} \\
& \lesssim N(T_{\max}-t)\| |x|^{-1}|u|^2 u\|_{L_t^\infty L_x^{\frac65}} \\
& \lesssim N(T_{\max}-t)\| u\|_{L_t^\infty L_x^6}^2 \| \nabla u\|_{L_t^\infty L_x^2}. 
\end{align*}
Thus, using Bernstein's inequality for the high frequencies, we deduce
\[
\|u(t)\|_{L_x^2} \lesssim N(T_{\max}-t) + N^{-1}.
\]
for any $t\in[0,T_{\max})$ and $N>0$.  Using conservation of mass, we deduce $\|u\|_{L^2}\equiv 0$ and hence $u\equiv 0$, as desired.
\end{proof}

As $u$ is not identically zero, we conclude that the finite-time blowup scenario is not possible.

%%%%%
\subsection{Soliton-like case}\label{S:soliton} In this section we assume $T_{\max}=\infty$ and derive a contradiction using a localized virial identity.  

\begin{theorem} If $T_{\max}=\infty$, then $u\equiv 0$.
\end{theorem}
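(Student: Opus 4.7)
The plan is a localized virial argument, by contradiction. Suppose $u\not\equiv 0$; then Lemma~\ref{energy-trapping} gives $\inf_{t\geq 0}\|u(t)\|_{\dot H^1}^2\gtrsim E(u)>0$. Fix a radial $\phi\in C^\infty(\R^3)$ with $\phi(x)=|x|^2$ for $|x|\leq 1$, $\phi$ constant for $|x|\geq 2$, and all derivatives bounded; set $a_R(x)=R^2\phi(x/R)$ and $V_R(t)=\int a_R(x)|u(t,x)|^2\,dx$. A direct computation using \eqref{nls} and repeated integration by parts yields the virial identity
\[
V_R''(t) = \int\bigl[4\,\partial_{jk}a_R\,\Re(\partial_j\bar u\,\partial_k u) - \Delta^2 a_R\,|u|^2 - \Delta a_R\,|x|^{-1}|u|^4 - \tfrac{\nabla a_R\cdot x}{|x|^3}|u|^4\bigr]\,dx.
\]
On $|x|\leq R$ we have $a_R(x)=|x|^2$ (so $\partial_{jk}a_R=2\delta_{jk}$, $\Delta a_R=6$, $\nabla a_R\cdot x/|x|^3=2/|x|$, and $\Delta^2 a_R=0$), which rearranges the above as
\[
V_R''(t) = 8\bigl[\|\nabla u(t)\|_{L^2}^2 - \||x|^{-1}|u(t)|^4\|_{L^1}\bigr] + E_R(t),
\]
where, using $|\partial^\alpha a_R|\lesssim R^{2-|\alpha|}$ uniformly in $R$, the error satisfies $|E_R(t)|\lesssim \int_{|x|>R}\bigl(|\nabla u|^2 + R^{-2}|u|^2 + |x|^{-1}|u|^4\bigr)\,dx$.

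Next I would make $E_R$ small uniformly in $t$ using the compactness property of Theorem~\ref{T:reduction}. Combined with the Sobolev embedding $\dot H^1\hookrightarrow L^6$ and the assumption $N(t)\geq 1$, this provides, for each $\eta>0$, an $R_\eta$ such that $\int_{|x|>R_\eta}|\nabla u(t)|^2\,dx+\int_{|x|>R_\eta}|u(t)|^6\,dx<\eta$ uniformly in $t$. H\"older's inequality gives $R^{-2}\int_{R<|x|<2R}|u|^2\lesssim\|u\|_{L^6(|x|>R)}^2$, and Hardy together with H\"older controls $\int_{|x|>R}|x|^{-1}|u|^4\lesssim\|\nabla u\|_{L^2}\|u\|_{L^6(|x|>R)}^3$. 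Combining these vanishing tails with the coercivity in Lemma~\ref{L:coercive} and the uniform lower bound $\|\nabla u(t)\|_{L^2}^2\gtrsim E(u)$, and choosing $R$ sufficiently large (depending only on $E(u)$), I obtain $V_R''(t)\geq c_0>0$ for all $t\in[0,\infty)$, hence $V_R'(t)\geq V_R'(0)+c_0 t\to\infty$ as $t\to\infty$.

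The contradiction then comes from a uniform upper bound on $V_R'(t)=2\int\nabla a_R\cdot\Im(\bar u\nabla u)\,dx$. Since $|\nabla a_R|\lesssim R$ with support in $|x|\leq 2R$, H\"older's inequality together with $\|u\|_{L^2(|x|\leq 2R)}\lesssim R\|u\|_{L^6}$ and Sobolev embedding give $|V_R'(t)|\lesssim R^2\|u\|_{L^6}\|\nabla u\|_{L^2}\lesssim R^2 E(u)$ uniformly in $t$; for $t$ sufficiently large this contradicts $V_R'(t)\to\infty$. Hence $E(u)=0$ and $u\equiv 0$ by Lemma~\ref{energy-trapping}. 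The main technical obstacle is the first step: verifying the algebraic cancellation that makes the virial quadratic form match the coercivity functional of Lemma~\ref{L:coercive} exactly---a coincidence specific to the critical scaling $b=1$, $p=2$, $d=3$, in which both the $\Delta a$ and $\nabla a\cdot\nabla(|x|^{-1})$ contributions combine to produce precisely $-8\int|x|^{-1}|u|^4$ against $8\|\nabla u\|_{L^2}^2$. The subsequent absorption of cutoff errors using compactness and the closing upper-bound/lower-bound contradiction are then essentially standard.
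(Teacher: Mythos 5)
Your argument is correct and is essentially the paper's own proof: the same localized weight ($a=|x|^2$ for $|x|\le R$, constant for $|x|\ge 2R$), the same virial identity \eqref{virial} with the same exact cancellation producing $8\int|\nabla u|^2-|x|^{-1}|u|^4\,dx$, the same use of compactness with $N(t)\ge 1$ to make the tail errors uniformly small, coercivity from Lemma~\ref{L:coercive}, and the same $R^2$-versus-linear-growth contradiction. The only (cosmetic) caveat is that the localized variance $V_R(t)=\int a_R|u|^2\,dx$ need not be finite for merely $\dot H^1$ data, since $a_R$ does not decay; one should therefore take the quantity you call $V_R'$, namely $M_{a_R}(t)=2\Im\int\bar u\,\nabla a_R\cdot\nabla u\,dx$ (which is finite by H\"older and Sobolev and is exactly the paper's $M_a$), as the primary object and compute its time derivative directly --- your proof only ever uses $V_R'$ and $V_R''$, so nothing else changes.
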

 
\begin{proof} For a smooth weight $a$, we define
\[
M_a(t) = 2\Im\int \bar u u_j a_j\,dx,
\]
where we use subscripts to denote partial derivatives and sum repeated indices.  Using a computation using \eqref{nls} and integration by parts, we then have
\begin{equation}\label{virial}
\frac{dM_a}{dt} = \int 4\Re a_{jk}\bar u_j  u_k - |u|^2 a_{jjkk} - |x|^{-1}|u|^4 a_{jj} - |x|^{-3}|u|^4 x_j a_j\,dx. 
\end{equation}
The standard virial identity corresponds to the choice $a(x)=|x|^2$.  However, in this case we cannot guarantee finiteness of $M_a(t)$, as we are working with merely $\dot H^1$ data.  Thus it is essential to localize the weight in space.  That we may do so successfully relies on the compactness of the solution $u(t)$.  In particular, we can establish the following.
\begin{lemma}[Tightness]\label{L:tight} Let $\eps>0$.  Then there exists $R=R(\eps)$ sufficiently large so that
\begin{equation}\label{tight}
\sup_{t\in[0,\infty)} \int_{|x|>R} \bigl\{|\nabla u(t,x)|^2 + |x|^{-1}|u(t,x)|^4 + |x|^{-2}|u(t,x)|^2\bigr\}\,dx<\eps.
\end{equation}
\end{lemma}

\begin{proof} From Theorem~\ref{T:reduction}, we have that $\{N(t)^{-\frac12}u(t,N(t)^{-1}x):t\in[0,\infty)\}$ is tight in $\dot H^1$.  By a change of variables, this implies that for any $\eps>0$, there exists $R=R(\eps)>0$ large enough that
\[
\sup_{t\in[0,\infty)}\int_{|x|>\frac{R}{N(t)}} |\nabla u(t,x)|^2\,dx < \eps. 
\]
Recalling that $\inf_{t\in[0,\infty)}N(t)\geq 1$, we note that we may discard the factor $N(t)$ in the integral above.  This handles the first term in \eqref{tight}.  The other terms may be included (after possibly enlarging $R$) in light of the continuous embeddings $\dot H^1\hookrightarrow L^4(|x|^{-1}\,dx)$ and $\dot H^1 \hookrightarrow L^2(|x|^{-2}\,dx)$, which follow from Hardy's inequality and Sobolev embedding.   \end{proof}

Let us now fix $\eps>0$ and choose $R=R(\eps)$ as in the lemma.  We then choose our weight $a$ such that  
\[
a(x) = \begin{cases} |x|^2 &\text{for } |x|\leq R \\ CR^2 & \text{for } |x|>2R\end{cases}
\]
for some $C>1$.  In the intermediate region, we can impose
\[
|\partial^{\alpha} a| \lesssim R^{2-|\alpha|} \qtq{for} R<|x|\leq 2R
\]
for any multiindex $\alpha$. With this choice of weight, H\"older's inequality, and Lemma~\ref{energy-trapping}, we obtain the upper bound 
\[
\sup_{t\in[0,\infty)} |M_a(t)| \lesssim R^2 \|u\|_{L_t^\infty \dot H_x^1}^2\lesssim R^2 E(u).
\]
On the other hand, the identity \eqref{virial} yields
\begin{align}
\frac{dM_a}{dt} & = 8 \int_{\R^3} |\nabla u|^2 - |x|^{-1}|u|^4\,dx \label{virial-main} \\
& + \mathcal{O}\biggl\{ \int_{|x|>R} |\nabla u|^2 + |x|^{-1}|u|^4 + |x|^{-2}|u|^2 \,dx\biggr\} \label{virial-error}
\end{align}
In particular, by Lemma~\ref{energy-trapping} and Lemma~\ref{L:coercive}, we have that
\[
\eqref{virial-main} \geq \delta\int |\nabla u|^2\,dx \gtrsim \delta E(u)
\]
uniformly over $t\in[0,\infty)$ for some $\delta>0$.  On the other hand, Lemma~\ref{L:tight} yields $|\eqref{virial-error}|<\eps$ uniformly over $t\in[0,\infty)$. Thus applying the fundamental theorem of calculus and integrating over an interval of the form $[0,T]$, we derive
\[
E(u) \lesssim \tfrac{R^2}{\delta T}E(u) + \eps.
\]
Choosing $T$ sufficiently large, we obtain $E(u)\lesssim\eps$.  As $\eps$ was arbitrary, we conclude that $E(u)\equiv 0$, and hence $u\equiv 0$, as desired.\end{proof}

As in the previous section, the fact that $u$ is not identically zero therefore precludes the possibility that $T_{\max}=\infty$.  As we already know that $T_{\max}<\infty$ is impossible (from Section~\ref{S:FTBU}), we conclude that no solution as in Theorem~\ref{T:reduction} can exist, thus completing the proof of Theorem~\ref{T}.

%%%%

\end{document}